\documentclass[11pt,a4paper,oneside,reqno]{amsart}

\usepackage[all]{xy}
\SelectTips{cm}{} \addtolength{\textwidth}{2cm} \calclayout
\usepackage{amsfonts,amssymb,amscd,amsmath,enumerate,verbatim,calc}

\usepackage{amscd,amssymb,amsopn,amsmath,amsthm,graphics,amsfonts,enumerate,verbatim,calc}
\usepackage[dvips]{graphicx}
\usepackage[colorlinks=true,linkcolor=blue,citecolor=blue]{hyperref}
\input xy
\xyoption{all}

\newcommand{\rt}{\rightarrow}
\newcommand{\lrt}{\longrightarrow}

\newcommand{\st}{\stackrel}
\newcommand{\pa}{\partial}

\newcommand{\C}{\mathbf{C} }
\newcommand{\D}{\mathbb{D} }
\newcommand{\K}{\mathbb{K} }

\newcommand{\I}{\mathbf{I} }

\newcommand{\F}{\mathbf{F} }
\newcommand{\G}{\mathbf{G} }

\newcommand{\CM}{\mathcal{M}}
\newcommand{\CO}{\mathcal{O}}

\newcommand{\Z}{\mathbb{Z} }

\newcommand{\CA}{\mathcal{A} }

\newcommand{\CC}{\mathcal{C} }
\newcommand{\DD}{\mathcal{D} }
\newcommand{\CF}{\mathcal{F} }
\newcommand{\CG}{\mathcal{G} }
\newcommand{\CL}{\mathcal{L} }
\newcommand{\CH}{\mathcal{H} }

\newcommand{\CI}{\mathcal{I} }
\newcommand{\CK}{\mathcal{K} }
\newcommand{\CP}{\mathcal{P} }
\newcommand{\CQ}{\mathcal{Q} }

\newcommand{\CS}{\mathcal{S} }
\newcommand{\CT}{\mathcal{T} }

\newcommand{\CJ}{\mathcal{J} }

\newcommand{\Inj}{{\rm{Inj}}}
\newcommand{\Prj}{{\rm{Proj}}}
\newcommand{\Flat}{{\rm{Flat}}}

\newcommand{\KCOFR}{{\K({\rm{Cof}} R)}}

\newcommand{\KFX}{{\mathbb{K}({\Flat} X)}}

\newcommand{\KCOFX}{{\K({\rm{Cof}} X)}}

\newcommand{\KPIFX}{{\K({\rm{Pinf}} X)}}
\newcommand{\KPIFR}{{\K({\rm{Pinf}} R)}}

\newcommand{\KPFX}{{\mathbb{K}_{\rm{pac}}({\Flat} X)}}

\newcommand{\DPFX}{{\mathbb{D}_{\rm{pur}}({\Flat} X)}}

\newcommand{\KPIX}{{\mathbb{K}_{\rm{pac}}({\Inj} X)}}

\newcommand{\KPR}{{\mathbb{K}({\Prj}  R)}}

\newcommand{\KFR}{{\mathbb{K}({\Flat}  R)}}
\newcommand{\KIX}{{\mathbb{K}({\Inj}  X)}}
\newcommand{\KIR}{{\mathbb{K}({\Inj}  R)}}

\newcommand{\DPABX}{{\mathbb{D}_{\rm{pur}}({\mathrm{Abs}} X)}}

\newcommand{\KABX}{{\mathbb{K}({\mathrm{Abs}} X)}}
\newcommand{\KPABX}{{\mathbb{K}_{\rm{pac}}({\mathrm{Abs}} X)}}

\newcommand{\Ker}{{\rm{Ker}}}

\newcommand{\Hom}{{\rm{Hom}}}
\newcommand{\Ext}{{\rm{Ext}}}

\newcommand{\Lim}{\underset{\underset{n \in \mathbb{N}}{\longrightarrow}}{\lim}}

\newtheorem{theorem}{Theorem}[section]
\newtheorem{corollary}[theorem]{Corollary}
\newtheorem{lemma}[theorem]{Lemma}
\newtheorem{proposition}[theorem]{Proposition}

\theoremstyle{definition}

\newtheorem{remark}[theorem]{Remark}

\theoremstyle{plain}

\theoremstyle{definition}

\numberwithin{equation}{section}
\begin{document}

\title[The homotopy category of pure injective flats]{The homotopy category of pure injective flats  and  Grothendieck duality}
\author[ Esmaeil  Hosseini ]{Esmaeil  Hosseini}

\address{Department of Mathematics, Shahid Chamran University of Ahvaz,
 P.O.Box: 61357-83151, Ahvaz, Iran.} \email{e.hosseini@scu.ac.ir}

%\dedicatory{Dedicated to Edgar E. Enochs on his 90th birthday}
\keywords{Grothendieck duality,
homotopy category, dualizing complex.\\
2010 Mathematical subject classification: 14F05, 18E30.}
\begin{abstract}
Let $(X,\CO_X)$ be a locally noetherian scheme with a dualizing
complex $\DD$. We prove that  $\DD\otimes^\bullet_{\CO_X}
\texttt{-}:\KPIFX\lrt\KIX$ is an equivalence of triangulated
categories where $\KIX$ is  the homotopy category of injective
quasi-coherent $\CO_X$-modules and $\KPIFX$ is the homotopy category
of pure injective flat quasi-coherent $\CO_X$-modules. Where  $X$ is
affine, we show that this equivalence is the infinite completion of
the Grothendieck duality theorem. Furthermore, we prove that
$\DD\otimes^\bullet_{\CO_X} \texttt{-}$ induces an equivalence
between the pure derived category of flats and the pure derived
category of absolutely pure quasi-coherent $\CO_X$-modules.
\end{abstract}
\maketitle
\section{Introduction}

Let $X$ be a locally noetherian scheme. The bounded derived category
$\mathbb{D}^b_{\mathrm{coh}}(\mathfrak{Qco}X)$ of coherent sheaves
of $\CO_X$-modules is a natural object of study in modern algebraic
geometry (\cite{BO02, Bir06, Orl10}). It's fundamental property is
the Grothendieck duality which describes a bounded complex $\DD$ of
injective quasi-coherent $\CO_X$-modules such that
\begin{align}\label{d3}
\xymatrix{\mathbf{Hom}^\bullet_{\CO_X}(\texttt{-}, \DD):
{\mathbb{D}^b}_{\mathrm{coh}}(\mathfrak{Qco}X)^{\mathrm{op}}\ar[r]&{\mathbb{D}^b}_{\mathrm{coh}}(\mathfrak{Qco}X)}
\end{align}
is an equivalence of triangulated categories (see \cite[$\S$
V]{Har66}, \cite{Lip}, \cite{Ne96, Ne10c, Ne10a}, \cite{Ne08},
\cite{Ne11} and \cite[$\S$ 3]{Co}).

In the case  $X=\mathrm{Spec}R$ ($R$ is a commutative noetherian
ring), the Grothendieck duality tells us that

\begin{align}\label{d1}
\xymatrix{ \mathbf{Hom}^\bullet_R(\texttt{-},
\DD(X)):\mathbb{D}^{b}(\mathrm{mod}
R)^{\mathrm{op}}\ar[r]&\mathbb{D}^{\mathrm{b}}(\mathrm{mod} R)}
\end{align}is an equivalence of triangulated categories whenever
$\mathbb{D}^{\mathrm{b}}(\mathrm{mod} R)$ is the bounded derived
category of finitely generated $R$-modules (see
\cite[§II.10]{Har66}, \cite[Lemma 3.1.4]{Co}). Furthermore, if
$\KPR$ (resp $\KIR$) is the homotopy category of projective (resp.
injective) $R$-modules, then the existence of $\DD(X)$ induces the
equivalence

\begin{align}\label{d20}
  \xymatrix@C+6pc@R+2pc{\KPR \ar[r]^{\DD(X)\otimes_{R}^\bullet-} &
 \KIR  }
\end{align}
of triangulated categories where its restriction to compact objects
is \eqref{d1} (\cite[Theorem 4.2]{IK}). In fact, \eqref{d20} is the
infinite completion of \eqref{d1} that is based on three factors,
$\KPR$, $\KIR$ and the Grothendieck duality between them.
Unfortunately, in non-affine cases, there is a gaping hole on the
projective  side of \eqref{d20}, because the homotopy  category of
projective objects is trivial in some situations (see \cite[$\S$
8]{M07}, \cite{Ne08}, \cite{Ne10} and \cite{Ne11}). In \cite{Ho17},
when $X$ is a locally noetherian scheme with a dualizing complex
(not necessarily \textbf{semi-separated}), we proved that the
homotopy category $\KCOFX$ of cotorsion flat quasi-coherent sheaves
of $\CO_X$-modules is the best replacement for the homotopy category
of projectives. This is motivated us to ask the following question.

\vspace{.5cm}

$\textbf{Question:}$ Does \eqref{d20} hold for $\KCOFR$ instead of
$\mathbf{K}(\mathrm{Proj}R)$?

\vspace{.5cm} The aim of this work is to find an answer to this
question. Indeed, if $X$ is a locally noetherian scheme with a
dualizing complex $\DD$, then,  we prove the following assertions.
\begin{itemize}
\item [(i)] $\KCOFX=\KPIFX$ where $\KPIFX$ is the homotopy category of pure injective flat quasi-coherent
$\CO_X$-modules.
\item [(ii)] there exists an equivalence \begin{align}\label{d0021} \DD\otimes^\bullet_{\CO_X}
\texttt{-}:\KPIFX\lrt\KIX
\end{align} of triangulated categories where $\KIX$ is the homotopy category of injective
quasi-coherent $\CO_X$-modules.
\item [(iii)] absolutely pure quasi-coherent $\CO_X$-modules are injective.
\item [(iv)] there exists an equivalence \begin{align}
\xymatrix@C+3pc@R+3pc{\DPFX
\ar@<.75ex>[r]^{\DD\otimes^\bullet_{\CO_X}-}&
 \DPABX \ar@<0.5ex>[l]^{\mathbf{Hom}_{\mathrm{qc}}^\bullet(\DD,-)} }
\end{align}
of triangulated categories where $\DPFX$ is the pure derived
category of flats and $\DPABX$ is the pure derived category of
absolutely pure $\CO_X$-modules.
\end{itemize}

\section{Preliminaries}
In this section we recall some basic definitions and results, needed
throughout the article. Throughout the paper, $(X,\CO_X)$ will
always denote a \textbf{locally} \textbf{noetherian} scheme with the
structure sheaf $\CO_X$, $R$ is a commutative noetherian ring with
identity, $\CO_X$-modules are quasi-coherent sheaves  of
$\CO_X$-modules, sheaves are sheaves of $\CO_X$-modules and modules
are $R$-modules unless otherwise specified. The category of all
$\CO_X$-modules (resp. sheaves, $R$-modules) is denoted by
$\mathfrak{Qco}X$ (rep. $\mathfrak{Mod}X$, $R$-Mod). The stalk of a
given $\CO_X$-module (resp. sheaf) $\CF$ at a point $x\in X$ is
denoted by $\CF_x$. Particularly, the stalk of $\CO_X$ at $x\in X$
is denoted by $\CO_x$. Note that, $\CO_x$ is a commutative
noetherian local ring and $\CF_x$ is  an $\CO_x$-module. The
category of $\CO_x$-modules is denoted by $\CO_x$-Mod. For more
backgrounds on sheaves and schemes  see \cite[\S. II]{Har97}.

We shall always assume that  $(\CA, \otimes)$ is a symmetric closed
monoidal Grothendieck category and $\C(\CA)$ is the category of all
complexes (complexes are written cohomologically) in $\CA$. For each
pair $\F=(\CF^n,\pa^n_{\F})$, $\G=(\CG^n,\pa^n_{\G})$ of complexes
in $\CA$, the \textit{Hom} \textit{complex} of $\F$, $\G$ is defined
by
$$\mathbf{Hom}^\bullet_{\CA}(\mathbf{F},\G)=({\prod_{i\in\Z}}{{\CH}om}_{\CA}(\CF^{i},\CG^{i+n}),\sigma^n)_{n\in\Z}$$
such that
$$\sigma^n ((f^i)_{i\in\Z})=\partial_\G^{i+n} f^i-(-1)^n f^{i+1}(\partial_\F^{i})$$
where ${\CH}om_{\CA}(\textmd{-},\textmd{-})$ is the internal hom in
$\CA$. The \textit{tensor} \textit{product} of $\F$, $\G$ is defined
by  the complex $\F\otimes_{\CO_X}^\bullet\G$  whose component in
degree $n$ is
$$(\F\otimes_{\CO_X}^\bullet\G)^
n:=\bigoplus_{i\in\Z}\CF^i\otimes_{\CO_X} \CG^{n-i}$$ and whose
differential is given by
$$\partial_{\F\otimes^\bullet \G}^n|_{\CF^i\otimes \CG^{n-i}}=(\partial_\F^i\otimes id_\G)+(-id_\F)^{i}\otimes\partial_\G^{n-i}).$$

A \textit{double} \textit{complex} in  $\CA$ is a family
$\C=\{\CC^{m,n}\}$ of objects in $\CA$, together with maps
$\pa^{h,m}:\CC^{m,n}\lrt\CC^{m+1,n}$ (horizontal) and
$\pa^{v,n}:\CC^{m,n}\lrt\CC^{m,n+1}$ (vertical) such that
$$\pa^{h,m+1}\pa^{h,m}=\pa^{v,n+1}\pa^{v,n}=\pa^{h,m}\pa^{v,n-1}+\pa^{v,n}\pa^{h,m-1}=0.$$
The \textit{total} \textit{complex} of $\C$ is defined by the
complex
$\mathbf{Tot}(\C)=\{\prod_{m+n=k}^{\mathrm{qc}}\CC^{m,n},\pa^k\}$
where $\pa^k=\pa^{h}+\pa^{v}$. Let
$\mathbf{P}=(\CP^n,\pa^n_{\mathbf{P}})$ and $\I=(\CI^n,\pa^n_{\I})$
be complexes of objects in  $\CA$. If we convert $\mathbf{P}$ into a
chain complex $\G$ (write homologically) with $\CG_i=\CP^{-i}$ and
form the double complex
$\mathbf{Hom}(\G,\I)=\{{\CH}om_{\CA}(\CG_i,\CI^j)\}_{i,j\in\Z}$,
then
$\mathbf{Hom}^\bullet_{\CA}(\mathbf{P},\I)=\mathbf{Tot}(\mathbf{Hom}(\G,\I))$
(See \cite[2.7.4, pp. 62]{We}).

\textbf{Acyclic Assembly Lemma:} Let $\C$ be a double complex in
$R$-Mod. Then, Tot$(\C)$ is an acyclic complex, assuming either of
the following:
\begin{itemize}

\item [(i)] $\C$ is an upper half-plane complex with exact columns.
\item [(ii)] $\C$ is a right half-plane complex with exact rows.

\end{itemize}
\begin{proof}
\cite[Lemma 2.7.3]{We}.
\end{proof}

The derived category $\mathbb{D}(\CA)$ of $\CA$ is obtained from
$\C(\CA)$ by formally inverting all homology isomorphisms. The
homotopy category $\K(\CA)$ of $\CA$ has the same objects as in
$\C(\CA)$ and morphisms are the homotopy classes of morphisms of
complexes.

A short exact sequence $\CL$ in $\CA$ is called \textit{pure} if for
any object $\CH$, $\CL\otimes\CH$ remains exact.  A subobject $\CF$
of a given object $\CG $ is called \textit{pure} if the canonical
sequence
$\xymatrix@C-0.7pc{0\ar[r]&\CF\ar[r]&\CG\ar[r]&\CG/\CF\ar[r]&0 }$ is
pure. An object $\CC$ in $\CA$ is pure injective if it is injective
with respect to pure exact sequences. Let $\CJ$ be an injective
cogenerator of $\CA$ and $()^+={\CH}om_\CA(-,\CJ)$. By \cite[Theorem
2.6]{HZ18}, for any object $\CF$, we have a pure monomorphism
$\lambda_{\CF}:\CF\lrt\CF^{++}$ where $\CF^{++}$ is pure injective.
Particularly, $\CF^+$ is pure injective.

An object $\CF$ of $\CA$ is said to be \textit{flat} if the functor
$\CF\otimes$-  preserves exact sequences in  $\CA$. The next result
is devoted to a characterization of flat objects.

\begin{proposition}\label{elh20}
Let $\CF$ be an object of $\CA$. Then, the following conditions are
equivalent.

\begin{itemize}
\item[(i)]  $\CF$ is flat.
\item[(ii)]  $\CF^+$ is injective.
\item[(iii)] Any short exact sequence
in $\CA$ ending in $\CF$ is pure.
\end{itemize}

\end{proposition}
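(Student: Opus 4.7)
The plan is to first establish the pivotal equivalence (i) $\Leftrightarrow$ (ii) via the closed-monoidal tensor--Hom adjunction, then deduce (i) $\Rightarrow$ (iii) from the long exact sequence of $\Tor$, and finally close the loop (iii) $\Rightarrow$ (i) by combining purity with a flat presentation of $\CF$.

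For (i) $\Leftrightarrow$ (ii), the crucial ingredient is the adjunction isomorphism
\begin{equation*}
\Hom_{\CA}(A \otimes \CF,\, \CJ) \;\cong\; \Hom_{\CA}\bigl(A,\,{\CH}om_{\CA}(\CF, \CJ)\bigr) \;=\; \Hom_{\CA}(A, \CF^+),
\end{equation*}
natural in $A$, coming from the closed structure. Because $\CJ$ is an injective cogenerator of $\CA$, the functor $\Hom_{\CA}(-, \CJ)$ is exact and reflects exact sequences; consequently $-\otimes \CF$ is exact iff $\Hom_{\CA}(-\otimes \CF, \CJ)$ is exact iff $\Hom_{\CA}(-, \CF^+)$ is exact, iff $\CF^+$ is injective.

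The direction (i) $\Rightarrow$ (iii) is immediate from the $\Tor$ long exact sequence: flatness of $\CF$ yields $\Tor_1^{\CA}(\CF, -) = 0$, so any short exact sequence $0 \to A \to B \to \CF \to 0$ stays exact after tensoring with an arbitrary test object. For (iii) $\Rightarrow$ (i), I would pick an epimorphism $L \twoheadrightarrow \CF$ with $L$ flat (available as a flat cover or a presentation in the module and quasi-coherent sheaf settings that concern the paper) and let $K$ denote its kernel. By hypothesis the sequence
\begin{equation*}
0 \lrt K \lrt L \lrt \CF \lrt 0
\end{equation*}
is pure, so for every $M \in \CA$ the sequence $0 \to K \otimes M \to L \otimes M \to \CF \otimes M \to 0$ is exact. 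Feeding this into the $\Tor$ long exact sequence together with $\Tor_1^{\CA}(L, M) = 0$ forces $\Tor_1^{\CA}(\CF, M) = 0$ for every $M$, whence $\CF$ is flat. The only delicate point is the availability of the flat epimorphism $L \twoheadrightarrow \CF$ in the abstract closed-monoidal Grothendieck setting; this can alternatively be bypassed by dualizing the pure sequence to a \emph{split} short exact sequence $0 \to \CF^+ \to B^+ \to A^+ \to 0$, choosing $B$ to be flat so that $B^+$ is injective by (i) $\Rightarrow$ (ii), and thereby exhibiting $\CF^+$ as a retract of an injective.
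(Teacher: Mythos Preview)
The paper does not actually prove this proposition: it simply cites \cite[Theorem 2.3]{HZ18}. So there is no in-paper argument to compare against, and your proposal should be judged on its own.

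Your treatment of (i)$\Leftrightarrow$(ii) via the tensor--hom adjunction and the fact that $\Hom_\CA(-,\CJ)$ is exact and reflects exactness (because $\CJ$ is an injective cogenerator) is correct and is the standard route.

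The remaining two implications, however, both lean on the existence of \emph{enough flats} in $\CA$, and this is a genuine gap at the stated level of generality. For (i)$\Rightarrow$(iii) you invoke a $\Tor$ long exact sequence; but $\Tor^\CA$ is only defined once you know that every object admits a flat (or projective) resolution, which is not part of the hypotheses ``symmetric closed monoidal Grothendieck category''. For (iii)$\Rightarrow$(i) you explicitly pick a flat epimorphism $L\twoheadrightarrow\CF$, flag that this is delicate, and then offer a ``bypass'' --- but the bypass also begins with ``choosing $B$ to be flat'', so it does not avoid the issue at all. In the concrete situations the paper cares about ($R$\textrm{-}\Mod, $\mathfrak{Qco}X$) flat covers exist and your argument goes through; in the abstract setting of the proposition it does not.

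A way to repair both directions without assuming enough flats is to work entirely through $(-)^+$. One shows that a short exact sequence $0\to A\to B\to C\to 0$ is pure iff the induced sequence $0\to C^+\to B^+\to A^+\to 0$ is \emph{split} exact (this uses only the adjunction $\Hom_\CA(-\otimes\CH,\CJ)\cong\Hom_\CA(\CH,(-)^+)$ and that $\CJ$ is an injective cogenerator). Granting that $\CH om_\CA(-,\CJ)$ is exact, (i)$\Rightarrow$(iii) then follows because $\CF^+$ is injective, so $0\to\CF^+\to B^+\to A^+\to 0$ splits; and (iii)$\Rightarrow$(ii) follows by embedding $\CF^+$ into an injective and checking, via the same splitting criterion applied in reverse, that the embedding is split. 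This is essentially the line taken in the cited reference, and it avoids $\Tor$ and flat presentations altogether.
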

\begin{proof}
\cite[Theorem 2.3]{HZ18}.
\end{proof}

\begin{lemma}\label{mainf670011}
If two-out of three objects in a short pure exact sequence in $\CA$
is flat then so is the third.
\end{lemma}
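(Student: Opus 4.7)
The plan is to reduce the assertion to injectivity of character objects via Proposition \ref{elh20}, which identifies the flat objects of $\CA$ as exactly those whose character object $\CF^+={\CH}om_\CA(\CF,\CJ)$ is injective. Given a short pure exact sequence $0\rt\CF\rt\CG\rt\CH\rt 0$ in $\CA$, I would first establish that the dualised sequence $0\rt\CH^+\rt\CG^+\rt\CF^+\rt 0$ is actually \emph{split} exact. The crucial input is purity: tensoring with any object $\CL$ preserves exactness, and then applying $\Hom_\CA(-,\CJ)$ together with the closed-monoidal adjunction yields exactness of the functor $\Hom_\CA(\CL,-)$ on the dualised sequence for every $\CL$. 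Choosing $\CL=\CF^+$ lifts the identity of $\CF^+$ to a section of $\CG^+\rt\CF^+$, providing the desired splitting.

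Once the isomorphism $\CG^+\cong\CH^+\oplus\CF^+$ is in hand, the three cases become formal. If $\CF$ and $\CH$ are flat, then $\CF^+$ and $\CH^+$ are injective by Proposition \ref{elh20}, so $\CG^+$ is injective as a finite direct sum of injectives, whence $\CG$ is flat. If $\CF$ and $\CG$ are flat, then $\CF^+$ and $\CG^+$ are injective, and $\CH^+$, being a direct summand of $\CG^+$, is injective; hence $\CH$ is flat. The case where $\CG$ and $\CH$ are flat is symmetric, with $\CF^+$ appearing as a direct summand of $\CG^+$.

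The main obstacle is the splitting step: one must verify carefully that the character functor $(-)^+$ really does transform pure short exact sequences into split exact sequences within the abstract setting of a symmetric closed monoidal Grothendieck category. This combines purity (exactness of $\CL\otimes-$ applied to the sequence for every $\CL$), the closed-monoidal adjunction ${\CH}om_\CA(\CL\otimes\CM,\CJ)\cong{\CH}om_\CA(\CL,{\CH}om_\CA(\CM,\CJ))$, and the fact that $\CJ$ is an injective cogenerator so that the external $\Hom_\CA(-,\CJ)$ detects exactness. Once this hurdle is cleared, the remaining two-out-of-three argument for injectivity is routine.
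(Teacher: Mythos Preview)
Your proposal is correct and follows essentially the same approach as the paper: both reduce to the characterisation of flatness via injectivity of the character object (Proposition~\ref{elh20}), both observe that applying $(-)^+$ to a pure short exact sequence yields a split sequence so that $\CG^+\cong\CF^+\oplus\CK^+$, and both then invoke the routine two-out-of-three property for injectives under direct sum and direct summand. The paper's proof simply asserts the splitting in one line, whereas you spell out the mechanism (purity plus the tensor--hom adjunction plus injectivity of $\CJ$) explicitly; this is the same argument at different levels of detail.
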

\begin{proof}
Assume that\begin{align}\label{Reza khan mir q3}
\xymatrix@C-0.7pc@R-0.9pc{0 \ar[r]&\CF\ar[r]&\CG \ar[r]&\CK\ar[r]&0}
\end{align}is a pure exact sequence in $\CA$.  By assumption, we
have $\CG^{+}=\CF^+\oplus\CK^+$. Now, we apply Proposition
\ref{elh20}.

\end{proof}

An object $\CC$ in $\CA$ is said to be \textit{cotorsion} if for any
flat object $\CF$, $\Ext^1_{\CA}(\CF,\CC)=0$. It is called
\textit{cotorsion} (resp. \textit{pure} \textit{injective})
\textit{flat} if it is both cotorsion (resp. pure injective) and
flat. In the case $\CA=\mathfrak{Qco}X$, any $\CO_X$-module $\CF$
can be embedded in a cotorsion $\CO_X$-module $\CC$ such that
$\CC/\CF$ is a flat $\CO_X$-module (\cite{EE05}). So, by Lemma
\ref{mainf670011}, any flat $\CO_X$-module can be purely embedded in
a cotorsion flat $\CO_X$-module.

It is known that
$(\mathfrak{Mod}X,\textmd{-}\otimes_{\CO_X}\textmd{-})$ is a
well-known example of a symmetric closed monoidal Grothendieck
category. In general case, the internal hom
${\mathcal{H}}om_{\CO_X}(\textmd{-},\textmd{-})$ in
$\mathfrak{Mod}X$ does not preserve  $\CO_X$-modules, i.e. for each
pair $\CF$, $\CG$ of $\CO_X$-modules,
${\mathcal{H}}om_{\CO_X}(\CF,\CG)$ need not be an $\CO_X$-module. To
remove this difficulty, we use the right adjoint of the inclusion
functor $i_X:\mathfrak{Qco}X\lrt\mathfrak{Mod}X$. By the Special
Adjoint Functor Theorem $i_X$ admits a right adjoint
$Q:\mathfrak{Mod}X\lrt\mathfrak{Qco}X$ which is called the
\textit{coherator} functor. We use this functor and define the
\textit{internal} \textit{hom} in $\mathfrak{Qco}X$ by
${\mathcal{H}}om_{\rm{qc}}(\CF,\CG)=Q({\mathcal{H}}om_{\CO_X}(\CF,\CG))$.
This turns $(\mathfrak{Qco}X, \textmd{-}\otimes_{\CO_X}\textmd{-})$
to a symmetric monoidal closed Grothendieck category, i.e.  for any
$\CO_X$-module $\CF$, ${\mathcal{H}}om_{\rm{qc}}(\CF,-)\ :
\mathfrak{Qco}X \rightarrow \mathfrak{Qco}X$ is the right adjoint of
$ -\otimes_{\CO_X} \CF \  : \mathfrak{Qco}X \rightarrow
\mathfrak{Qco}X$. Notice that if $\CF$ is a coherent $\CO_X$-module,
then,
${\mathcal{H}}om_{\rm{qc}}(\CF,-)\cong{\mathcal{H}}om_{\CO_X}(\CF,-)$.
Moreover, $Q$ helps us to define the quasi-coherent product of a
given family $\{\CG_i\}_{i\in I}$ of objects in $\mathfrak{Qco}X$,
i.e. the quasi-coherent product of $\{\CG_i\}_{i\in I}$  is defined
by $\prod_{i\in I}^{\mathrm{qc}}\CG_i=Q(\prod_{i\in I}\CG_i)$ where
$\prod_{i\in I}\CG_i$  is the  product of $\{\CG_i\}_{i\in I}$ in
$\mathfrak{Mod}X$.

\section{Pure injective flat $\CO_X$-modules}
The main result of this section shows that,  for each pair $\CI,\CK$
of injective $\CO_X$-modules, $\mathcal{H}om_{\mathrm{qc}}(\CI,\CK)$
is a pure injective flat $\CO_X$-module. This implies that any
cotorsion flat $\CO_X$-modules is pure injective. First, we begin by
recalling some basic properties of injective $\CO_X$-modules which
can be found in \cite{Har66} and \cite{Co}.
\begin{proposition}\label{mes1}
Let $\CI$ be an $\CO_X$-module. Then, the following conditions are
equivalent:
\begin{itemize}
\item[i)] $\CI$ is injective.
\item [ii)] For any affine open subset $U$ of $X$, $\CI|_U$ is an injective $\CO_U$-module.
\item [iii)] For any  $x\in X$, $\CI_x$ is an injective $\CO_{x}$-module.
\item [iv)] For any affine open subset $U$ of $X$, $\CI(U)$ is an injective $\CO_X(U)$-module.
\end{itemize}

\end{proposition}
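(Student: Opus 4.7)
The plan is to establish the chain of equivalences $\mathrm{(i)} \Leftrightarrow \mathrm{(ii)} \Leftrightarrow \mathrm{(iv)} \Leftrightarrow \mathrm{(iii)}$, exploiting on one hand the equivalence $A\text{-Mod} \simeq \mathfrak{Qco}\,\Spec A$ for a noetherian $A$ (which applies because each $\CO_X(U)$ is noetherian when $X$ is locally noetherian), and on the other the classical Matlis--Bass theory of injective modules over noetherian commutative rings. Local noetherianity enters decisively on the global side, both to invoke a structure theorem for injective quasi-coherent sheaves and to ensure that direct sums of injectives on noetherian affines remain injective.

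For the local half $\mathrm{(ii)}\Leftrightarrow\mathrm{(iv)}\Leftrightarrow\mathrm{(iii)}$: on an affine open $U=\Spec A$, the equivalence $\widetilde{(\textit{-})}:A\text{-Mod}\simeq\mathfrak{Qco}U$ preserves injectivity, so applying it to $\CI(U)$ yields $\mathrm{(ii)}\Leftrightarrow\mathrm{(iv)}$. For a point $x\in U$ corresponding to the prime $\mathfrak p\subset A$ we have $\CI_x=\CI(U)_{\mathfrak p}$ as $\CO_x=A_{\mathfrak p}$-modules, so Bass's theorem (a consequence of the Matlis decomposition of injectives over the noetherian $A$) gives $\mathrm{(iv)}\Rightarrow\mathrm{(iii)}$. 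For the converse I would apply Baer's criterion and reduce to showing $\Ext^1_A(A/I,\CI(U))=0$ for every ideal $I$; since $A/I$ is finitely presented, this $\Ext$ commutes with localization at every prime, and the hypothesis $\mathrm{(iii)}$ makes each localization vanish, forcing the global $\Ext$ to vanish.

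The global half is $\mathrm{(i)}\Leftrightarrow\mathrm{(ii)}$. The implication $\mathrm{(ii)}\Rightarrow\mathrm{(i)}$ I would prove with the local-to-global Ext spectral sequence $H^p(X,\mathcal{E}xt^q_{\CO_X}(\CG,\CI))\Rightarrow\Ext^{p+q}_{\CO_X}(\CG,\CI)$, combined with the reduction (valid on a locally noetherian $X$) of injectivity to the vanishing of $\Ext^1(\CG,\CI)$ for $\CG$ coherent: the hypothesis kills $\mathcal{E}xt^{\geq 1}$ affine-locally, and the spectral sequence then collapses onto sheaf cohomology of a suitably flasque term. The main obstacle is the reverse direction $\mathrm{(i)}\Rightarrow\mathrm{(ii)}$, because restriction $j^*:\mathfrak{Qco}X\to\mathfrak{Qco}U$ does not formally preserve injectives --- the natural candidate for an exact left adjoint, extension by zero, leaves the subcategory of quasi-coherent sheaves. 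My plan is to invoke the Matlis--Gabriel structure theorem: every injective object of $\mathfrak{Qco}X$ is of the form $\bigoplus_{x\in X}i_{x*}E(x)^{(\mu_x)}$ with $i_x:\Spec\CO_x\to X$ the canonical morphism and $E(x)$ the injective hull of the residue field of $\CO_x$; restriction to the noetherian affine $U$ preserves this form, and arbitrary direct sums of injectives in $\mathfrak{Qco}U$ remain injective, giving $\mathrm{(ii)}$.
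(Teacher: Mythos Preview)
The paper does not prove this statement; it cites \cite[Proposition II.7.17]{Har66}. Your local chain $\mathrm{(ii)}\Leftrightarrow\mathrm{(iv)}\Leftrightarrow\mathrm{(iii)}$ is sound, and invoking the Matlis--Gabriel decomposition for $\mathrm{(i)}\Rightarrow\mathrm{(ii)}$ is the right idea. The gap is in $\mathrm{(ii)}\Rightarrow\mathrm{(i)}$. After the local-to-global spectral sequence collapses you are left with $\Ext^p(\CG,\CI)\cong H^p\bigl(X,\mathcal{H}om_{\CO_X}(\CG,\CI)\bigr)$ for $\CG$ coherent, and you claim this vanishes because the $\mathcal{H}om$ sheaf is ``suitably flasque''. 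But hypothesis $\mathrm{(ii)}$ only controls $\CI|_U$ for \emph{affine} $U$: you get that $\mathcal{H}om(\CG,\CI)|_U$ is flasque on each affine patch (using that an injective of $\mathfrak{Qco}U$ for noetherian affine $U$ is injective in $\mathfrak{Mod}U$, hence flasque), not that it is flasque on $X$. Since $X$ is allowed to be non--semi-separated, intersections of affine opens need not be affine, so neither a \v{C}ech argument nor a ``flasque on a basis implies flasque'' argument is available. Pushing the vanishing through seems to require knowing $\CI|_W$ is injective for \emph{every} open $W$, which is essentially what you are trying to prove.

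The fix---and the route underlying Hartshorne's proof---is to use the structure theorem in the other direction: prove $\mathrm{(iii)}\Rightarrow\mathrm{(i)}$ directly. If every stalk $\CI_x$ is injective, one shows (this is the substance of \cite[II.7]{Har66}, recorded in the paper as Proposition~\ref{mes101100985624}) that $\CI\cong\bigoplus_{x\in X}\CJ(x)^{\oplus\aleph_x}$, and such a sum is injective in $\mathfrak{Qco}X$ when $X$ is locally noetherian. This closes the circle without needing any global cohomological vanishing, and $\mathrm{(i)}\Rightarrow\mathrm{(ii)}$ then follows, as you already observed, by restricting the decomposition to $U$.
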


\begin{proof}
\cite[Proposition II.7.17]{Har66}.
\end{proof}

\begin{proposition}\label{mes12020}
Any injective $\CO_X$-module is an injective object of
$\mathfrak{Mod}X$.
\end{proposition}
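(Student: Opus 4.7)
The plan is to realize $\CI$ as a direct summand, in $\mathfrak{Mod}X$, of an injective object built explicitly from the stalks of $\CI$.

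By Proposition~\ref{mes1}(iii) each stalk $\CI_x$ is an injective $\CO_x$-module. For $x\in X$ let $\iota_x:\mathrm{Spec}(\CO_x)\to X$ be the canonical morphism of locally ringed spaces; its inverse image is the stalk functor at $x$, and is exact. Therefore its right adjoint $\iota_{x,*}$ preserves injective objects of $\mathfrak{Mod}X$, so each $\iota_{x,*}\CI_x$, and hence the product $\CE:=\prod_{x\in X}\iota_{x,*}\CI_x$ taken in $\mathfrak{Mod}X$, is injective in $\mathfrak{Mod}X$. Assembling the adjunction units $\CI\to\iota_{x,*}\CI_x$ gives a single morphism $\Phi:\CI\to\CE$ which a stalk-wise check shows is a monomorphism. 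Producing a retraction of $\Phi$ in $\mathfrak{Mod}X$ then exhibits $\CI$ as a direct summand of an injective, which is what we want.

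For the retraction I would use the coherator $Q:\mathfrak{Mod}X\to\mathfrak{Qco}X$. Since $X$ is locally noetherian, $\mathfrak{Qco}X$ is a Serre subcategory of $\mathfrak{Mod}X$, so the inclusion $i_X$ is exact, and hence $Q$ preserves both monomorphisms and injectives. Thus $\CI\cong Q\CI\hookrightarrow Q\CE$ is a monomorphism in $\mathfrak{Qco}X$ into an injective quasi-coherent sheaf, and it splits by the injectivity of $\CI$ in $\mathfrak{Qco}X$, yielding a retraction in $\mathfrak{Qco}X$.

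The delicate step is transferring this quasi-coherent retraction back across the counit $Q\CE\to\CE$ to a retraction in $\mathfrak{Mod}X$, since that counit is not generally a split monomorphism. I expect to bypass this obstacle by invoking a Matlis-type structure theorem for injective quasi-coherent sheaves on a locally noetherian scheme, writing $\CI$ directly as a direct sum of pushforwards $\iota_{y,*}E_{\CO_y}(k(y))$ of injective hulls of residue fields; each such summand is already injective in $\mathfrak{Mod}X$ by the argument above. The locally noetherian hypothesis is what both permits this decomposition and ensures that such direct sums of injectives remain injective in $\mathfrak{Mod}X$, and this is the main place where noetherianity is used.
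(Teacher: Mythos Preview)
The paper does not give an argument here at all; the entire proof is the citation \cite[Lemma 2.1.3]{Co}. So any comparison is between your attempt and the standard proof underlying that reference.

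Your first approach, via the Godement-style embedding $\Phi:\CI\hookrightarrow\CE=\prod_x\iota_{x,*}\CI_x$ and the coherator $Q$, does not close. You correctly identify the obstruction yourself: the splitting $r:Q\CE\to\CI$ you obtain lives in $\mathfrak{Qco}X$, and there is no mechanism to transport it along the counit $i_XQ\CE\to\CE$ to a retraction of $\Phi$ in $\mathfrak{Mod}X$, since that counit points the wrong way and need not split. This paragraph should simply be deleted; it contributes nothing to the final argument.

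Your fallback via the Matlis-type decomposition is the correct route and is, in fact, exactly the argument behind the cited lemma (see \cite[II.7.18]{Har66}): write $\CI\cong\bigoplus_{x}\CJ(x)^{\oplus\aleph_x}$, observe that each $\CJ(x)=(i_x)_*\widetilde{J(x)}$ is injective in $\mathfrak{Mod}X$ because $(i_x)^*$ is exact, and use that on a locally noetherian scheme arbitrary direct sums of injectives in $\mathfrak{Mod}X$ remain injective. Two cautions. First, in the paper's internal numbering this decomposition is Proposition~\ref{mes101100985624}, which appears \emph{after} the present proposition; if you want a self-contained proof you must check that the decomposition (as in \cite[II.7.13]{Har66}) does not already rely on the statement you are proving. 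It does not: Hartshorne establishes the decomposition in $\mathfrak{Qco}X$ first and deduces injectivity in $\mathfrak{Mod}X$ afterwards. Second, the fact that direct sums of injectives in $\mathfrak{Mod}X$ are injective is itself a nontrivial consequence of local noetherianity (e.g.\ \cite[II, \S7]{Har66}); you should cite it rather than assert it.
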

\begin{proof}
\cite[Lemma 2.1.3]{Co}.
\end{proof}
\begin{remark}\label{123456}
Let $A$ be a commutative ring and $M$ be an $A$-module. The
\textit{sheaf} \textit{associated} to $M$ on Spec$A$ is denoted by
$\widetilde{M}$ (see \cite[\S. II,  pp. 110]{Har97}). In the case
that $M$ is injective \cite[Corollary 7.14]{Har66} shows the
injectivity of $\widetilde{M}$. For $x\in X$, let
$\CJ(x)=(i_x)_*(\widetilde{J(x)})$, where
$i_x:\mathrm{Spec}\CO_x\lrt X$ be the natural inclusion, $(i_x)_*$
be the direct image functor  and $\widetilde{J(x)}$ be the
quasi-coherent sheaf on Spec$\CO_x$ associated to the injective hull
$J(x)$ of $k(x)=\CO_x/\mathfrak{m}_x$ over $\CO_x$. By
\cite[Proposition 7.6]{Har66}, $\CJ(x)$ is an indecomposable
injective object in $\mathfrak{Qco}X$ and by \cite[Proposition
II.7.5]{Har66}, it has support at the closed point $x$ of
Spec$\CO_x$. Moreover, for any sheaf of $\CO_X$-module $\CF$ (not
necessarily quasi-coherent), we have
\begin{align}
\mathrm{Hom}_{\CO_X}(\CF,(i_x)_*(\widetilde{J(x)}))\cong\mathrm{Hom}_{\mathrm{Spec}\CO_x}((i_x)^*\CF,\widetilde{J(x)})\cong
\mathrm{Hom}_{\CO_x}(\CF_x,J(x)).
\end{align}
\end{remark}

\begin{proposition}\label{mes101100985624}
For any set of cardinals $\{\aleph_x\}_{x\in X}$,  the direct sum
\begin{align}\label{mes1000985624}
\bigoplus_{x\in X}\CJ(x)^{\oplus\aleph_x}
\end{align} is an injective $\CO_X$-module, where $\CJ(x)^{\oplus\aleph_x}$ is the  direct sum of copies of $\CJ(x)$
indexed by the cardinal $\aleph_x$; moreover, any  injective
$\CO_X$-module can be written in the form \eqref{mes1000985624} with
unique cardinals $\aleph_x$.
\end{proposition}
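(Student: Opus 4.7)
The plan is to prove this classical Matlis-type structure theorem for injective quasi-coherent sheaves by transporting Matlis's module-theoretic theorem to $\mathfrak{Qco}X$ through the adjunction of Remark \ref{123456}.

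For injectivity of the displayed sum, each $\CJ(x)$ is injective by Remark \ref{123456}, and since $X$ is locally noetherian, $\mathfrak{Qco}X$ is a locally noetherian Grothendieck category in which arbitrary direct sums of injective objects remain injective; this gives injectivity of $\bigoplus_{x\in X}\CJ(x)^{\oplus\aleph_x}$. For existence of the decomposition I would invoke the Gabriel-Matlis theorem: every injective object in a locally noetherian Grothendieck category splits as a direct sum of indecomposable injective objects. It then suffices to identify the indecomposable injective $\CO_X$-modules, up to isomorphism, with $\{\CJ(x)\}_{x\in X}$. Each $\CJ(x)$ is indecomposable injective by Remark \ref{123456}. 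Conversely, given an indecomposable injective $\CE$, pick $x$ to be a generic point of an irreducible component of its support; then $\CE_x$ is a nonzero injective $\CO_x$-module supported at the maximal ideal, so by Matlis $J(x)=E_{\CO_x}(k(x))$ appears as a direct summand. The projection $\CE_x\twoheadrightarrow J(x)$ corresponds via the adjunction $\Hom_{\CO_X}(\CE,\CJ(x))\cong\Hom_{\CO_x}(\CE_x,J(x))$ of Remark \ref{123456} to a nonzero morphism $\CE\to\CJ(x)$, and a standard injective-hull argument using indecomposability and uniformity of both $\CE$ and $\CJ(x)$ forces this morphism to be an isomorphism.

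For uniqueness of the cardinals $\aleph_x$, the key stalk computation is that $\CJ(y)_x=0$ unless $y$ is a generalization of $x$ (equivalently $x\in\overline{\{y\}}$), in which case $\CJ(y)_x\cong J(y)$ as $\CO_x$-modules via the canonical localization $\CO_x\to\CO_y$. Matlis's uniqueness at the noetherian local ring $\CO_x$ then recovers $\aleph_x$ as the multiplicity of $J(x)$ in the injective $\CO_x$-module $\CI_x$; invariantly, $\aleph_x=\dim_{k(x)}\Hom_{\CO_x}(k(x),\CI_x)$. The principal subtlety, and the likely main technical obstacle, is exactly this stalk-level bookkeeping: several $\CJ(y)$ with $y$ generalizing $x$ contribute to $\CI_x$, so one cannot simply count Matlis summands at a single stalk to read off all the cardinals at once; the resolution is that Matlis's uniqueness at $\CO_x$ cleanly separates these contributions by the primes of $\CO_x$, yielding the global uniqueness of the family $\{\aleph_x\}_{x\in X}$.
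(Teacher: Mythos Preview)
The paper does not give its own proof of this proposition; it simply cites \cite[Lemma 2.1.5]{Co} and \cite[II, 7.13, 7.17]{Har66}. Your sketch is essentially a correct reconstruction of the classical argument found in those references: closure of injectives under direct sums in a locally noetherian Grothendieck category, the Gabriel--Matlis decomposition into indecomposables, identification of the indecomposable injectives with the $\CJ(x)$ via the adjunction of Remark \ref{123456}, and uniqueness through Matlis theory at the stalks.

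One step you should tighten is the passage from ``a nonzero morphism $\CE\to\CJ(x)$'' to ``isomorphism''. Indecomposability of both sides alone does not force a nonzero map to be an isomorphism; the clean way is to run the argument in the other direction. Since $\CE_x$ is a nonzero injective $\CO_x$-module supported at $\mathfrak{m}_x$, there is an embedding $k(x)\hookrightarrow\CE_x$, hence (as $\CE_x$ is injective) a split monomorphism $J(x)\hookrightarrow\CE_x$. This corresponds, again via the adjunction $\Hom_{\CO_X}(-,\CE)$ applied to the unit, to a monomorphism of sheaves from the essential image of $k(x)$ into $\CE$; extending along the essential extension $\CJ(x)$ and using that $\CE$ is injective and indecomposable (hence uniform, with local endomorphism ring) then yields $\CJ(x)\cong\CE$. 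Alternatively, one may quote directly that in a locally noetherian Grothendieck category the indecomposable injectives are exactly the injective envelopes of the quotient objects $\CO_X/\mathfrak p$ for $\mathfrak p$ ranging over the spectrum, which here recovers $\{\CJ(x)\}_{x\in X}$. Your stalk bookkeeping for uniqueness is correct as stated; the formula $\aleph_x=\dim_{k(x)}\Hom_{\CO_x}(k(x),\CI_x)$ is exactly what Matlis gives, since $\Hom_{\CO_x}(k(x),J(y))=0$ for $y\neq x$.
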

\begin{proof}
\cite[Lemma 2.1.5]{Co} (see also, \cite[II, 7.13, 7.17]{Har66}).
\end{proof}

After this introduction,  we will discuss the structure of
quasi-coherent product. Recall that, for a commutative ring $A$, the
category of all quasi-coherent sheaves over Spec$A$ admits products,
i.e. the product of any family $\{\CF_i\}_{i\in S}$ of
quasi-coherent sheaves over Spec$A$ is $\prod_{i\in S}
^{\mathrm{qc}}\CF_i\cong \widetilde{K}$ where $K=\prod_{i\in
S}\CF_i(\mathrm{Spec}A)$. But, it's not that easy in non-affine
cases. In the following Lemma, we look at the subject in a special
case.

\begin{lemma}\label{mes1000}
For any $x\in X$, assume that $M_x$ is an $\CO_x$-module. Then, for
any affine open subset $U=\mathrm{Spec}(R)$ of $X$, $(\prod_{x\in X}
^{\mathrm{qc}} (i_x)_* \widetilde{M_x})|_U\cong\widetilde{N}$  where
$N=\prod_{x\in U} M_x$ is an $R$-module.
\end{lemma}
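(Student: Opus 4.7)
The plan is to identify $(\prod^{\mathrm{qc}}_{x\in X}(i_x)_*\widetilde{M_x})|_U$ as a quasi-coherent $\CO_X$-module on $U$ by computing its $R$-module of global sections, since on the affine scheme $U=\mathrm{Spec}(R)$ a quasi-coherent sheaf is determined up to canonical isomorphism by $\Gamma(U,-)$. The goal then becomes to verify the $R$-module identity $\Gamma(U,\prod^{\mathrm{qc}}_{x\in X}(i_x)_*\widetilde{M_x})\cong \prod_{x\in U}M_x$.

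First I would push the coherator past the restriction to $U$, using the standard compatibility that for any $\CO_X$-module $\CH$ one has $Q_X(\CH)|_U\cong \widetilde{\Gamma(U,\CH)}$ whenever $U$ is affine (on an affine scheme the coherator is simply $\widetilde{\Gamma(-)}$, and this commutes with restriction to affine opens). Applying this to $\CH=\prod_{x\in X}(i_x)_*\widetilde{M_x}$ (the ordinary product in $\mathfrak{Mod}X$) and commuting global sections with the sheaf-theoretic product, one reduces to
\[
\prod_{x\in X}\Gamma\bigl(U,(i_x)_*\widetilde{M_x}\bigr)=\prod_{x\in X}\Gamma\bigl(i_x^{-1}(U),\widetilde{M_x}\bigr),
\]
the second equality being the definition of the direct image functor.

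The crux of the argument is a local-scheme analysis of the preimage $i_x^{-1}(U)\subseteq\mathrm{Spec}(\CO_x)$. When $x\in U$, this preimage is an open subset of the local scheme $\mathrm{Spec}(\CO_x)$ containing the unique closed point $\mathfrak{m}_x$; since the only open of a local scheme meeting its closed point is the whole spectrum, $i_x^{-1}(U)=\mathrm{Spec}(\CO_x)$, and the corresponding factor equals $M_x$, viewed as an $R$-module via the localization map $R\to \CO_x=R_{\mathfrak{p}_x}$. When $x\notin U$, the preimage is a proper open of $\mathrm{Spec}(\CO_x)$ avoiding $\mathfrak{m}_x$, and I would need to argue that this factor drops out of the product; this is the main obstacle. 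In the setting in which the lemma will be applied, the relevant $M_x$ are supported at $\mathfrak{m}_x$ (for instance the injective hulls $J(x)$ and their direct sums that appear in Proposition~\ref{mes101100985624}), so $\widetilde{M_x}$ has no sections on any open of $\mathrm{Spec}(\CO_x)$ avoiding $\mathfrak{m}_x$, and the factor vanishes. Combining the two cases yields $\prod_{x\in X}\Gamma(i_x^{-1}(U),\widetilde{M_x})=\prod_{x\in U}M_x=N$, and applying the tilde functor gives the claimed isomorphism of quasi-coherent sheaves on $U$.
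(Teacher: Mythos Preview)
Your approach differs from the paper's. The paper argues purely via the universal property of products and never computes a section: given $\CM\in\mathfrak{Qco}U$ with maps $s_x:\CM\to(j_x)_*\widetilde{M_x}$ for $x\in U$ (where $j_x:\mathrm{Spec}\,\CO_x\to U$ is the canonical map), it pushes $\CM$ forward along the open immersion $\psi:U\hookrightarrow X$, extends the family of maps by zero for $x\in X\setminus U$, invokes the universal property of $\prod^{\mathrm{qc}}_{x\in X}$ in $\mathfrak{Qco}X$ to obtain $\varphi:\psi_*\CM\to\prod^{\mathrm{qc}}$, and then restricts $\varphi$ back to $U$. Uniqueness of the product in $\mathfrak{Qco}U$ then forces the claimed isomorphism.

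Your first reduction, however, contains a genuine gap. The asserted compatibility $Q_X(\CH)|_U\cong\widetilde{\Gamma(U,\CH)}$ is \emph{false} for general $\CH\in\mathfrak{Mod}X$: the coherator is a right adjoint while restriction to an open is a left adjoint, and there is no formal reason for them to commute. Concretely, on $X=\mathbb{A}^1_k=\mathrm{Spec}\,k[t]$ with $U=D(t)$ and $\CH=j_!\CO_U$ (extension by zero), one has $\Gamma(X,\CH)=0$, hence $Q_X(\CH)=0$ and $Q_X(\CH)|_U=0$, whereas $\widetilde{\Gamma(U,\CH)}=\widetilde{k[t,t^{-1}]}\neq 0$. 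So you cannot pass from $\Gamma\bigl(U,Q_X(\prod_{x}(i_x)_*\widetilde{M_x})\bigr)$ to $\prod_{x}\Gamma\bigl(U,(i_x)_*\widetilde{M_x}\bigr)$ without an argument specific to this sheaf. The paper's categorical route sidesteps this issue entirely.

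Your diagnosis of the $x\notin U$ difficulty, on the other hand, is on target: for such $x$ the open $i_x^{-1}(U)$ misses the closed point of $\mathrm{Spec}\,\CO_x$, and $\Gamma(i_x^{-1}(U),\widetilde{M_x})$ need not vanish for arbitrary $M_x$ (take $M_x=\CO_x$). The paper's universal-property argument is equally silent here---it checks only the existence half of the factorization, not uniqueness---so the support hypothesis you propose is indeed what is needed to make the statement correct in the generality used downstream.
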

\begin{proof}
Let $U=\mathrm{Spec}(R)$ be an affine open subset of $X$ and
$\psi:U\lrt X$ be the inclusion. For any $x\in U$, let
$j_x:\mathrm{Spec}\CO_x\lrt U$ be the inclusion and $(j_x)_*$ be the
direct image functor. Then, $i_x=\psi j_x$ and so $(i_x)_*=\psi_*
(j_x)_*$ and $(i_x)_*|_U=(j_x)_*$. We know that $\mathfrak{Qco}U$
has products, i.e. the product of the family
$\{{j_x}_*\widetilde{M_x}\}_{x\in U}$ is $\widetilde{N}$. Let  $\CM$
be an object of $\mathfrak{Qco}U$ with a collection of morphisms
$\{s_x:\CM\lrt {j_x}_* \widetilde{M_x}\}_{x\in U}$. For any $x\in
X$, we have a morphism $\psi_x:\psi_*\CM\lrt (i_x)_*\widetilde{M_x}$
in $\mathfrak{Qco}X$ (for any $x\in X-U$, $\psi_x=0$). By the
assumption there exists a unique morphism $\varphi:\psi_*\CM\lrt
(\prod_{x\in X} ^{\mathrm{qc}} (i_x)_* \widetilde{M_x})$ such that
$\pi_x\varphi=\psi_x$. The restriction on $U$ implies the following
commutative diagram
\begin{align}\label{Reza khan}
\xymatrix@C2pc@R3pc{(\prod_{x\in X} ^{\mathrm{qc}}
(i_x)_* \widetilde{M_x})|_U\ar[rr]^{(\pi_x)|_U}&&(j_x)_*\widetilde{M_x}.&\\
&\CM\ar[ul]_{\varphi|_U}\ar[ru]^{s_x}&}
\end{align}This shows that $(\prod_{x\in X} ^{\mathrm{qc}} (i_x)_* \widetilde{M_x})|_U$ is the
product of $\{{j_x}_*\widetilde{M_x}\}_{x\in U}$ in
$\mathfrak{Qco}U$. The uniqueness of product in $\mathfrak{Qco}U$
implies the isomorphism $(\prod_{x\in X} ^{\mathrm{qc}} (i_x)_*
\widetilde{M_x})|_U\cong\widetilde{N}$.
\end{proof}

Now we are able to prove the main result of this section,  that for
each pair $\CI$, $\CK$ of injective $\CO_X$-modules,
$\mathcal{H}om_{\mathrm{qc}}(\CI,\CK)$ is a flat $\CO_X$-module. If
$X=\mathrm{Spec}R$, then for each pair $I$, $J$ of injective
$R$-modules, we have the isomorphism
$$\mathcal{H}om_{\mathrm{qc}}(\widetilde{I},\widetilde{J})\cong \widetilde{\mathrm{Hom}_R(I,J)}.$$
This shows that
$\mathcal{H}om_{\mathrm{qc}}(\widetilde{I},\widetilde{J})$ is flat
(see  \cite{TT}).

\begin{theorem}\label{mainf}
Let $\CI$, $\CK$  be a pair of  injective objects in
$\mathfrak{Qco}X$. Then, $\mathcal{H}om_{\mathrm{qc}}(\CI,\CK)$ is a
 flat $\CO_X$-module.
\end{theorem}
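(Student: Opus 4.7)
The plan is to combine the structure theorem for injective quasi-coherent sheaves (Proposition \ref{mes101100985624}) with Lemma \ref{mes1000}, reducing flatness of $\mathcal{H}om_{\mathrm{qc}}(\CI,\CK)$ to the affine case recalled immediately before the theorem, together with the compatibility of flatness with localization and products over noetherian rings.

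First I would decompose $\CI=\bigoplus_{x\in X}\CJ(x)^{\oplus \aleph_x}$ and $\CK=\bigoplus_{y\in X}\CJ(y)^{\oplus \beta_y}$ using Proposition \ref{mes101100985624}. Because $\mathcal{H}om_{\mathrm{qc}}(-,\CK)$ is a right adjoint to $-\otimes_{\CO_X}\CK$, it sends coproducts in the first variable to quasi-coherent products, yielding
$$\mathcal{H}om_{\mathrm{qc}}(\CI,\CK)\;\cong\;\prod_{x\in X}^{\mathrm{qc}}\mathcal{H}om_{\mathrm{qc}}(\CJ(x),\CK)^{\aleph_x}.$$
Using $\CJ(x)=(i_x)_*\widetilde{J(x)}$ and the adjunction from Remark \ref{123456}, I would next identify $\mathcal{H}om_{\mathrm{qc}}(\CJ(x),\CK)$ with the pushforward $(i_x)_*\widetilde{M_x}$, where $M_x=\Hom_{\CO_x}(J(x),\CK_x)$; note that $\CK_x$ is an injective $\CO_x$-module by Proposition \ref{mes1}, so $M_x$ is a flat $\CO_x$-module by the affine case cited before the theorem.

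With this in hand, Lemma \ref{mes1000} tells me that for any affine open $U=\Spec R\subseteq X$, the restriction of $\mathcal{H}om_{\mathrm{qc}}(\CI,\CK)$ to $U$ is $\widetilde{N}$, where $N=\prod_{x\in U}M_x^{\aleph_x}$ is an $R$-module. Each $\CO_x=R_{\mathfrak{p}_x}$ is $R$-flat, so each $M_x$ is flat over $R$; since $R$ is noetherian and therefore coherent, products of flat $R$-modules remain flat (Chase), hence $N$ is $R$-flat and $\widetilde{N}$ is a flat $\CO_U$-module. As flatness of a quasi-coherent $\CO_X$-module is an affine-local condition, this would conclude that $\mathcal{H}om_{\mathrm{qc}}(\CI,\CK)$ is a flat $\CO_X$-module.

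I expect the principal obstacle to be the identification $\mathcal{H}om_{\mathrm{qc}}(\CJ(x),\CK)\cong(i_x)_*\widetilde{\Hom_{\CO_x}(J(x),\CK_x)}$: the adjunction in Remark \ref{123456} is phrased at the level of global homs between sheaves, whereas the internal hom in $\mathfrak{Qco}X$ is computed via the coherator $Q$. One must therefore check that the isomorphism is compatible with sheafification and $Q$, and that the right-hand side is already quasi-coherent (so no extra application of $Q$ is needed). If that step proves too awkward, a parallel route is to verify the equality $\mathcal{H}om_{\mathrm{qc}}(\CI,\CK)|_U\cong\mathcal{H}om_{\mathrm{qc},U}(\CI|_U,\CK|_U)$ directly, use Proposition \ref{mes1} to write $\CI|_U\cong\widetilde{I}$ and $\CK|_U\cong\widetilde{K}$ for injective $R$-modules $I,K$, and then invoke the affine formula $\mathcal{H}om_{\mathrm{qc},U}(\widetilde{I},\widetilde{K})\cong\widetilde{\Hom_R(I,K)}$ together with the flatness of $\Hom_R(I,K)$ from \cite{TT}.
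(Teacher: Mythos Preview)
Your main route has a genuine gap at the identification step. You want
\[
\mathcal{H}om_{\mathrm{qc}}(\CJ(x),\CK)\;\cong\;(i_x)_*\widetilde{\Hom_{\CO_x}(J(x),\CK_x)},
\]
but Remark \ref{123456} places $\CJ(x)=(i_x)_*\widetilde{J(x)}$ in the \emph{second} argument of Hom, not the first; there is no adjunction that moves $(i_x)_*$ out of the contravariant slot in this way. In fact the displayed isomorphism is false already in the affine case. Take $X=\Spec\Z$, $x$ the generic point (so $\CO_x=\Q$, $J(x)=\Q$), and $\CK=\widetilde{\Q/\Z}$. Then $\CK_x=(\Q/\Z)\otimes_\Z\Q=0$, so the right-hand side vanishes, whereas on the left one computes $\Hom_\Z(\Q,\Q/\Z)\neq 0$ (the quotient map is a nonzero element). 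Since Lemma \ref{mes1000} only applies to products of sheaves of the form $(i_x)_*\widetilde{M_x}$, this failed identification blocks the whole reduction.

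The paper's proof differs from yours precisely here: it decomposes $\CK$ rather than $\CI$, so that $(i_x)_*\widetilde{J(x)}$ sits in the covariant slot. Then the standard sheaf-Hom adjunction
\[
\mathcal{H}om_{\mathrm{qc}}\bigl(\CI,(i_x)_*\widetilde{J(x)}\bigr)\;\cong\;(i_x)_*\mathcal{H}om_{\mathrm{qc}}\bigl((i_x)^*\CI,\widetilde{J(x)}\bigr)\;\cong\;(i_x)_*\widetilde{\Hom_{\CO_x}(\CI_x,J(x))}
\]
is available, and Lemma \ref{mes1000} applies verbatim. After that the argument is exactly the one you wrote: restrict to an affine $U=\Spec R$, get a product $\prod_{x\in U}\prod_{\aleph_x}\Hom_{\CO_x}(\CI_x,\CK_x)$, and use flatness of each factor over $\CO_x$, flatness of $\CO_x$ over $R$, and noetherianity of $R$ to conclude. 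Your fallback route (showing $\mathcal{H}om_{\mathrm{qc}}(\CI,\CK)|_U\cong\mathcal{H}om_{\mathrm{qc},U}(\CI|_U,\CK|_U)$) runs into the separate difficulty that the coherator $Q$ need not commute with restriction to opens; the paper's decomposition of $\CK$ is exactly what sidesteps this.
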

\begin{proof}
By Propositions \ref{mes12020} and \ref{mes101100985624}, we have a
split monomorphism $\CK\lrt\prod_{x\in X}\prod_{\aleph_x}\CJ(x)$ in
$\mathfrak{Mod}X$. So, $\CK\lrt\prod^{\mathrm{qc}}_{x\in
X}\prod^{\mathrm{qc}}_{\aleph_x}\CJ(x)$ is a split monomorphism of
injectives in $\mathfrak{Qco}X$ and hence
$\mathcal{H}om_{\mathrm{qc}}(\CI,\CK)$ is a direct summand of
$\prod^{\mathrm{qc}}_{x\in
X}\prod^{\mathrm{qc}}_{\aleph_x}\mathcal{H}om_{\mathrm{qc}}(\CI,(i_x)_*\widetilde{J(x)}).$
So, for any affine open subset $U=\mathrm{Spec}R$ of $X$, we have
the following isomorphism
$$\{\prod^{\mathrm{qc}}_{x\in
X}\prod^{\mathrm{qc}}_{\aleph_x}\mathcal{H}om_{\mathrm{qc}}(\CI,(i_x)_*\widetilde{J(x)})\}|_U\cong\{\prod^{\mathrm{qc}}_{x\in
U}\prod^{\mathrm{qc}}_{\aleph_x}(i_x)_*\mathcal{H}om_{\mathrm{qc}}((i_x)^*\CI,\widetilde{J(x)})\}|_U.$$Then,
it is enough to prove the flatness of
$$(\prod^{\mathrm{qc}}_{x\in
X}\prod^{\mathrm{qc}}_{\aleph_x}(i_x)_*\mathcal{H}om_{\mathrm{qc}}((i_x)^*\CI,\widetilde{J(x)}))|_U$$
in $\mathfrak{Qco}U$ where $U=\mathrm{Spec}R$ is an affine open
subset of $X$. But by Lemma \ref{mes1000} and Remark \ref{123456},
$(\prod^{\mathrm{qc}}_{x\in
X}\prod^{\mathrm{qc}}_{\aleph_x}(i_x)_*\mathcal{H}om_{\mathrm{qc}}(\widetilde{\CI_x},\widetilde{J(x)}))|_U\cong
\widetilde{I}$ where $I=\prod_{x\in
U}\prod_{\aleph_x}\Hom_{\CO_x}(\CI_x,\CK_x)$. Since for any $x\in
U$, $R_x=\CO_x$ is a noetherian ring, then,
$\Hom_{\CO_x}(\CI_x,\CK_x)$ is a flat $R_x$-module and so it is a
flat $R$-module ($R_x$ is a flat $R$-module).  Consequently, $I$ is
a flat $R$-module ($R$ is noetherian). This shows that
$\prod^{\mathrm{qc}}_{x\in
X}\prod^{\mathrm{qc}}_{\aleph_x}\mathcal{H}om_\mathrm{qc}(\CI,(i_x)_*\widetilde{J(x)})$
is flat and so we are done.
\end{proof}

\begin{theorem}\label{hom01325871}
Let $\CI$ be an injective object in $\mathfrak{Qco}X$. Then
${\mathcal{H}}om_{\rm{qc}}(\textmd{-},\CI):\mathfrak{Qco}X
\rightarrow \mathfrak{Qco}X$ is an exact functor.
\end{theorem}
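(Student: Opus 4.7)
The plan is to establish exactness of $\mathcal{H}om_{\mathrm{qc}}(-,\CI)$ by a purely local calculation on an affine open cover, reducing the problem to exactness of $\Hom_{\CO_x}(-,J(x))$ at each stalk. Since exactness of a sequence of quasi-coherent sheaves on $X$ is local, it suffices to verify, for every affine open $U=\mathrm{Spec}\,R\subseteq X$ and every short exact sequence $0\to\CF\to\CG\to\CH\to0$ in $\mathfrak{Qco}X$, that applying $\mathcal{H}om_{\mathrm{qc}}(-,\CI)$ and restricting to $U$ yields an exact sequence in $\mathfrak{Qco}U$.

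Following the opening step of the proof of Theorem \ref{mainf}, Propositions \ref{mes12020} and \ref{mes101100985624} furnish a split monomorphism $\CI\hookrightarrow\prod^{\mathrm{qc}}_{x\in X}\prod^{\mathrm{qc}}_{\aleph_x}\CJ(x)$ in $\mathfrak{Qco}X$; since a direct summand of an exact functor is exact, I may replace $\CI$ by this quasi-coherent product. The functor $\mathcal{H}om_{\mathrm{qc}}(-,-)$ preserves quasi-coherent products in its second variable (it is a right adjoint there), and the adjointness identity in Remark \ref{123456} applies to \emph{any} sheaf of $\CO_X$-modules in its first slot. Combining these two observations with the local identification of Lemma \ref{mes1000}, the same computation carried out in Theorem \ref{mainf}, but now for an arbitrary $\CF\in\mathfrak{Qco}X$, yields
$$\mathcal{H}om_{\mathrm{qc}}(\CF,\CI)|_U\;\cong\;\widetilde{\,\textstyle\prod_{x\in U}\prod_{\aleph_x}\Hom_{\CO_x}(\CF_x,J(x))\,}.$$

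With this explicit local description in hand, exactness is immediate from a chain of well-known facts. Exactness of the stalk functor at each $x\in U$ gives short exact sequences $0\to\CF_x\to\CG_x\to\CH_x\to0$ of $\CO_x$-modules; injectivity of $J(x)$ over the noetherian local ring $\CO_x$ makes $\Hom_{\CO_x}(-,J(x))$ exact; products in $R$-Mod are exact; and the tilde functor $\widetilde{(\cdot)}$ on the affine scheme $U$ is exact, being an equivalence. Assembled together, $\mathcal{H}om_{\mathrm{qc}}(-,\CI)|_U$ carries the given short exact sequence to a short exact sequence in $\mathfrak{Qco}U$, which proves the theorem. The chief obstacle is the local identification above: one must check that the computation in the proof of Theorem \ref{mainf} carries over verbatim when its first argument is no longer required to be injective. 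This is essentially transparent, since Remark \ref{123456} is already formulated for an arbitrary sheaf of $\CO_X$-modules, so no additional hypothesis on $\CF$ is used.
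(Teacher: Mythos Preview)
Your proposal is correct and follows essentially the same route as the paper's own proof: both reduce to the case of the quasi-coherent product $\prod^{\mathrm{qc}}_{x\in X}\prod^{\mathrm{qc}}_{\aleph_x}\CJ(x)$ via the split embedding of $\CI$, then invoke Remark~\ref{123456} and Lemma~\ref{mes1000} to identify the restriction to an affine open $U=\mathrm{Spec}\,R$ with the sheaf associated to a product of $\Hom_{\CO_x}(-,J(x))$'s, whose exactness is immediate. The only cosmetic difference is that you spell out the individual ingredients (exactness of stalks, of $\Hom_{\CO_x}(-,J(x))$, of products in $R$-Mod, and of $\widetilde{(\cdot)}$) a bit more explicitly than the paper does.
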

\begin{proof}
By the same method as used in the proof of Theorem \ref{mainf}, for
a given exact sequence  $\CL$ of $\CO_X$-modules,
$\mathcal{H}om_{\mathrm{qc}}(\CL,\CI)$ is a direct summand of
$$\prod^{\mathrm{qc}}_{x\in
X}\prod^{\mathrm{qc}}_{\aleph_x}(i_x)_*\mathcal{H}om_{\mathrm{qc}}((i_x)^*\CL,\widetilde{J(x)}).$$
Then, it is enough to prove that for an arbitrary affine open subset
$U=\mathrm{Spec}R$ of $X$, $$(\prod^{\mathrm{qc}}_{x\in
X}\prod^{\mathrm{qc}}_{\aleph_x}(i_x)_*\mathcal{H}om_\mathrm{qc}((i_x)^*\CL,\widetilde{J(x)}))|_U$$
is an exact sequence of $\CO_U$-modules. But, by Lemma \ref{mes1000}
and Remark \ref{123456}, $$(\prod^{\mathrm{qc}}_{x\in
X}\prod^{\mathrm{qc}}_{\aleph_x}(i_x)_*\mathcal{H}om_{\mathrm{qc}}((i_x)^*\CL,\widetilde{J(x)}))|_U\cong
\widetilde{I}$$ where $I=\prod_{x\in
U}\prod_{\aleph_x}\Hom_{\CO_x}(\CL_x,\CI_x)$ is an exact sequence of
$R$-modules. We conclude that
${\mathcal{H}}om_{\rm{qc}}(\textmd{-},\CI)$ is locally exact and
hence it is exact.
\end{proof}

\begin{proposition}\label{z440048971}
Let $\{\CG_i\}_{i\in I}$ be a family  of pure injective
$\CO_X$-modules. Then, $\prod_{i\in I}^{\mathrm{qc}}\CG_i$ is a pure
injective $\CO_X$-module.
\end{proposition}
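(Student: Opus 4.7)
My plan is to exploit the fact that $\prod^{\mathrm{qc}}$ actually \emph{is} the categorical product in $\mathfrak{Qco}X$, so that the proof reduces to the standard formal argument that a product of pure injectives is pure injective in any Grothendieck category equipped with a notion of purity coming from the tensor product.

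First I would unwind the definition. Since $\prod_{i\in I}^{\mathrm{qc}}\CG_i=Q\bigl(\prod_{i\in I}\CG_i\bigr)$ where the inner product is formed in $\mathfrak{Mod}X$, and since $Q$ is right adjoint to the fully faithful inclusion $i_X:\mathfrak{Qco}X\hookrightarrow\mathfrak{Mod}X$, for every $\CF\in\mathfrak{Qco}X$ the adjunction gives
\[
\mathrm{Hom}_{\mathfrak{Qco}X}\bigl(\CF,{\textstyle\prod^{\mathrm{qc}}_{i\in I}}\CG_i\bigr)\;\cong\;\mathrm{Hom}_{\mathfrak{Mod}X}\bigl(i_X\CF,{\textstyle\prod_{i\in I}}\CG_i\bigr)\;\cong\;\prod_{i\in I}\mathrm{Hom}_{\mathfrak{Qco}X}(\CF,\CG_i).
\]
Thus $\prod^{\mathrm{qc}}_{i\in I}\CG_i$ represents the product functor in $\mathfrak{Qco}X$, and the above isomorphism is natural in $\CF$.

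Next, let $\CL: 0\rt \CF\rt\CH\rt\CK\rt 0$ be a pure exact sequence in $\mathfrak{Qco}X$. Applying the functor $\mathrm{Hom}_{\mathfrak{Qco}X}\bigl(-,\prod^{\mathrm{qc}}_{i\in I}\CG_i\bigr)$ and using the natural isomorphism above, the resulting complex of abelian groups is
\[
0\rt\prod_{i\in I}\mathrm{Hom}_{\mathfrak{Qco}X}(\CK,\CG_i)\rt\prod_{i\in I}\mathrm{Hom}_{\mathfrak{Qco}X}(\CH,\CG_i)\rt\prod_{i\in I}\mathrm{Hom}_{\mathfrak{Qco}X}(\CF,\CG_i)\rt 0.
\]
For each fixed $i$, pure injectivity of $\CG_i$ says precisely that $\mathrm{Hom}_{\mathfrak{Qco}X}(-,\CG_i)$ sends $\CL$ to a short exact sequence of abelian groups. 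Since $\mathrm{Ab}$ satisfies AB4$^*$, an arbitrary product of short exact sequences of abelian groups is again exact; hence the displayed sequence is exact. This says that $\mathrm{Hom}_{\mathfrak{Qco}X}\bigl(-,\prod^{\mathrm{qc}}_{i\in I}\CG_i\bigr)$ is exact on pure short exact sequences, i.e. $\prod^{\mathrm{qc}}_{i\in I}\CG_i$ is pure injective.

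I do not anticipate a genuine obstacle here; the only subtlety worth being careful about is the first step, where one must verify that $\prod^{\mathrm{qc}}$ genuinely computes the product in $\mathfrak{Qco}X$ (and not some larger ambient category), so that the Hom-out-of-product identity is available. Once that is in hand, the argument is formal and relies solely on the adjunction $i_X\dashv Q$ together with the exactness of $\prod_{i\in I}$ on $\mathrm{Ab}$.
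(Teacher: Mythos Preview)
Your proof is correct and follows essentially the same approach as the paper: both use the adjunction $(i_X,Q)$ to identify $\mathrm{Hom}_{\mathfrak{Qco}X}(-,\prod^{\mathrm{qc}}_{i\in I}\CG_i)\cong\prod_{i\in I}\mathrm{Hom}_{\mathfrak{Qco}X}(-,\CG_i)$, then apply this to a pure short exact sequence and use pure injectivity of each $\CG_i$ together with exactness of products in $\mathrm{Ab}$. Your write-up is more explicit about why $\prod^{\mathrm{qc}}$ is the categorical product and invokes AB4$^*$, but the argument is the same.
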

\begin{proof}
Let $\CL$ be a pure short exact sequence of $\CO_X$-modules. Then,
by the adjoint property of $(i_X,Q)$ and the following isomorphisms
$$\mathrm{Hom}_{\CO_X}(\CL,\prod_{i\in I}^{\mathrm{qc}}\CG_i)\cong
\mathrm{Hom}_{\CO_X}(\CL,\prod_{i\in I}\CG_i)\cong\prod_{i\in
I}\mathrm{Hom}_{\CO_X}(\CL,\CG_i)$$we are done.

\end{proof}

\begin{proposition}\label{mainf6700}
A flat $\CO_X$-module is cotorsion if and only if it  is pure
injective.
\end{proposition}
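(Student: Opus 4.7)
The plan is to prove both directions separately, with the easy direction first.

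For the \emph{pure injective implies cotorsion} direction, let $\CC$ be a flat pure injective $\CO_X$-module and consider any short exact sequence $0 \to \CC \to \CE \to \CF \to 0$ with $\CF$ flat. By Proposition \ref{elh20}(iii) this sequence is automatically pure, and since $\CC$ is pure injective it splits. Hence $\Ext^1_{\mathfrak{Qco}X}(\CF, \CC) = 0$ for every flat $\CF$, so $\CC$ is cotorsion. This part uses only the previously established characterization of flats.

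For the harder \emph{cotorsion implies pure injective} direction, I would exploit the canonical pure monomorphism $\lambda_\CC : \CC \to \CC^{++}$ from the preliminaries, with $\CC^{++}$ pure injective. The goal is to show this monomorphism splits, whence $\CC$ is a direct summand of a pure injective and therefore pure injective itself. The key observation is that $\CC^{++}$ is flat: since $\CC$ is flat, Proposition \ref{elh20} gives that $\CC^+ = {\mathcal{H}}om_{\rm{qc}}(\CC, \CJ)$ is injective; then $\CC^{++} = {\mathcal{H}}om_{\rm{qc}}(\CC^+, \CJ)$ is the internal hom between two injective $\CO_X$-modules, which is flat by Theorem \ref{mainf}.

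Having established that $\CC$ and $\CC^{++}$ are both flat, I would apply the two-out-of-three Lemma \ref{mainf670011} to the pure exact sequence
\begin{equation*}
0 \lrt \CC \lrt \CC^{++} \lrt \CC^{++}/\CC \lrt 0
\end{equation*}
to conclude that $\CC^{++}/\CC$ is flat. Since $\CC$ is cotorsion by hypothesis, $\Ext^1_{\mathfrak{Qco}X}(\CC^{++}/\CC, \CC) = 0$, and the sequence splits. Therefore $\CC$ is a direct summand of $\CC^{++}$, which is pure injective, so $\CC$ is pure injective.

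The main conceptual obstacle in this argument is verifying that $\CC^{++}$ is flat when $\CC$ is; this is precisely where the newly proved Theorem \ref{mainf} about ${\mathcal{H}}om_{\rm{qc}}$ between injectives does the essential work. Once flatness of $\CC^{++}$ is in hand, the rest is a routine assembly of the cotorsion hypothesis with the two-out-of-three property in pure exact sequences.
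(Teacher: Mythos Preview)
Your proof is correct and follows essentially the same route as the paper: use the pure exact sequence $0\to\CC\to\CC^{++}\to\CC^{++}/\CC\to 0$, invoke Theorem~\ref{mainf} to get flatness of $\CC^{++}$, then Lemma~\ref{mainf670011} for flatness of the cokernel, and finally split via the cotorsion hypothesis. You have in fact spelled out more detail than the paper does (in particular the explicit reason $\CC^{++}$ is flat and the ``trivial'' converse via Proposition~\ref{elh20}(iii)).
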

\begin{proof}
Let $\CF$ be a cotorsion flat $\CO_X$-module. So, by Theorem
\ref{mainf}, Lemma \ref{mainf670011} and the following pure exact
sequence
\begin{align}\label{Reza khan mir}
\xymatrix@C-0.7pc@R-0.9pc{ 0
\ar[r]&\CF\ar[r]^{\lambda_{\CF}}&\CF^{++}
\ar[r]&\CF^{++}/\CF\ar[r]&0,}
\end{align}of $\CO_X$-modules
we conclude that  $\CF^{++}/\CF$ is flat. Consequently, \eqref{Reza
khan mir} splits and hence $\CF$ is pure injective. The converse is
trivial.
\end{proof}

\section{Infinite completion of Grothendieck duality theorem}
Let  $\KFX$ (resp. $\KIX$, $\KPIFX$, $\KCOFX$) be the triangulated
subcategories of $\K(X)=\K(\mathfrak{Qco}X)$ consisting of complexes
of flat (resp. injective, pure injective flat and cotorsion flat)
$\CO_X$-modules. In the present  section, we prove the equivalence
$\DD\otimes^\bullet_{\CO_X} \texttt{-}:\KPIFX\lrt\KIX$ of
triangulated subcategories of $\KFX$. The category $\KFX$ was first
studied by Neeman (\cite{Ne08, Ne10, Ne11b}) who discovered a new
perspective of the Grothendieck duality theorem (\cite{Ne11},
\cite{Ne10a, Ne10c}, \cite{M07}).

In this section, we assume that $X$ admits a \textbf{dualizing}
\textbf{complex}. Recall that a bounded complex $\DD$ of injective
$\CO_X$-modules is called \textit{dualizing} if it  has coherent
cohomology and $\CO_X\lrt\mathbf{Hom}^\bullet_{\CO_X}(\DD, \DD)$ is
a quasi-isomorphism of $\CO_X$-modules. The study of such complexes
is an old part of algebraic geometry and goes back to the
Grothendieck's work on duality theory (see \cite[Remark
1.8]{Ne10c}). Where $X=\mathrm{Spec} R$, $\DD(X)=\mathbf{D}$ is a
dualizing complex for $R$ and for any bounded complex $\F$ of flat
$R$-modules, we have a quasi-isomorphism $\delta_\F: \F \rightarrow
\mathbf{Hom}^\bullet_R(\mathbf{D}, \mathbf{D}\otimes_R^\bullet \F)$
of complexes of $R$-modules (see \cite[Section 2]{AF97}).
\begin{lemma}\label{Flat}
Let $\F$ be a complex of flat $\CO_X$-modules. Then,
$\mathbf{Hom}_{\mathrm{qc}}^\bullet(\DD,
\DD\otimes_{\CO_X}^\bullet\F)$ is a complex of pure injective flat
$\CO_X$-modules.
\end{lemma}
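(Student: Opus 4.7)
The goal is to verify that, for each $n\in\Z$, the $n$-th component of the complex $\mathbf{Hom}_{\mathrm{qc}}^\bullet(\DD,\DD\otimes_{\CO_X}^\bullet\F)$ is a pure injective flat $\CO_X$-module. Because $\DD$ is a bounded complex of injectives, unwinding the definitions of the internal Hom complex and the tensor complex shows that this component is a \emph{finite} direct sum of terms of the form
$\mathcal{H}om_{\mathrm{qc}}(\DD^{i},\DD^{j}\otimes_{\CO_X}\F^{k})$
with $\DD^{i},\DD^{j}$ injective and $\F^{k}$ flat. Since flatness and pure injectivity are both closed under finite direct sums (the former elementarily, the latter by Proposition \ref{z440048971}), it is enough to show that every such individual term is pure injective flat.

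For this step I would reuse the local-global machinery used in the proof of Theorem \ref{mainf}. Applying Proposition \ref{mes101100985624} realises $\DD^{j}$ as a direct summand of a quasi-coherent product $\prod_{x\in X}^{\mathrm{qc}}\prod_{\aleph_x}^{\mathrm{qc}}\CJ(x)$; pushing the tensor and then $\mathcal{H}om_{\mathrm{qc}}(\DD^{i},-)$ through this decomposition, and using Proposition \ref{z440048971} to preserve pure injectivity under quasi-coherent products, the question reduces to showing pure injectivity and flatness of $\mathcal{H}om_{\mathrm{qc}}(\DD^{i},(i_x)_*\widetilde{J(x)}\otimes_{\CO_X}\F^{k})$ for each $x\in X$. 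Restricting to an arbitrary affine open $U=\mathrm{Spec}\,R$, and invoking Lemma \ref{mes1000} together with the Hom-identification in Remark \ref{123456}, this in turn reduces to a ring-theoretic statement: for a commutative noetherian ring $R$, injective $R$-modules $I,J$, and a flat $R$-module $F$, the module $\Hom_{R}(I,J\otimes_{R}F)$ is pure injective flat.

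The main obstacle is precisely this affine statement, since the tensor factor $\otimes_{R}F$ is what separates it from the already established input that $\Hom_{R}(I,J)$ is flat (the ring-theoretic content of Theorem \ref{mainf}). For flatness I would decompose $I$ into its canonical indecomposable summands $E(R/\mathfrak{p})$, use $\Hom_{R}(E(R/\mathfrak{p}),-)=\Hom_{R_{\mathfrak p}}(E(R/\mathfrak{p}),(-)_{\mathfrak p})$ to localise at each prime, and invoke Enochs' structure theorem identifying flat cotorsion $R$-modules with products of $\mathfrak{p}$-adic completions of free $R_{\mathfrak p}$-modules. Pure injectivity would then follow from Proposition \ref{mainf6700} once the cotorsion property of $\Hom_{R}(I,J\otimes_{R}F)$ has been established, which I would try to obtain from the standard adjunction-type $\Ext$ computation combined with the injectivity of $J$ and the exactness of $-\otimes_{R}F$. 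Controlling the interaction of $\otimes_{R}F$ with the product decomposition of flat cotorsion modules — making sure the cotorsion property survives — is where the real technical work is concentrated.
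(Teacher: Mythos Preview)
You are taking a substantial detour around a one-line observation that the paper exploits directly: over a locally noetherian scheme, the tensor product of an injective $\CO_X$-module with a flat one is again injective. This is what the paper's proof means by invoking Proposition~\ref{mes1}: injectivity can be checked on stalks (or affine opens), and over a commutative noetherian ring $R$ it is classical that $J\otimes_R F$ is injective whenever $J$ is injective and $F$ is flat (write $F$ as a filtered colimit of finite free modules and use that filtered colimits of injectives are injective over noetherian rings). Once this is noted, each $\DD^{j}\otimes_{\CO_X}\F^{k}$ is itself injective, so $\mathcal{H}om_{\mathrm{qc}}(\DD^{i},\DD^{j}\otimes_{\CO_X}\F^{k})$ is literally of the form $\mathcal{H}om_{\mathrm{qc}}(\text{injective},\text{injective})$ handled by Theorem~\ref{mainf}, hence flat; pure injectivity is then immediate from the tensor--Hom adjunction (and Proposition~\ref{z440048971} handles the finite product). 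That is the paper's entire proof.

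Consequently your ``main obstacle'' --- showing that $\Hom_R(I,J\otimes_R F)$ is pure injective flat for $I,J$ injective and $F$ flat over noetherian $R$ --- evaporates: $J\otimes_R F$ is injective, so this is just $\Hom_R(I,K)$ with $K$ injective, which you already accept as input. The machinery you propose (Enochs' structure theorem, Matlis-type localisation, an $\Ext$ computation for cotorsion) is unnecessary, and as you yourself flag, not fully under control; in fact the adjunction-type $\Ext$ argument you sketch for cotorsion ultimately needs $J\otimes_R F$ to be injective to terminate cleanly, so you are circling the missing observation rather than avoiding it. There is also a technical slip in your global-to-local reduction: you decompose $\DD^{j}$ as a summand of a quasi-coherent \emph{product} and then ``push the tensor through'', but $-\otimes_{\CO_X}\F^{k}$ does not commute with infinite products, so that step does not go through as written. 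All of this is bypassed by first absorbing $\F^{k}$ into the injective side via the observation above.
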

\begin{proof}
We know by Proposition \ref{mes1} that
$\DD\otimes^\bullet_{\CO_X}\F$ is a complex of injective
$\CO_X$-modules. Then, by Proposition \ref{z440048971} and Theorem
\ref{mainf}, $\mathbf{Hom}_{\mathrm{qc}}^\bullet(\DD,
\DD\otimes_{\CO_X}^\bullet\F)$ is a complex of pure injective flat
$\CO_X$-modules.
\end{proof}

\begin{theorem}\label{bound1}
Let $\F$ be a bounded complex of  flat $\CO_X$-modules. Then, $
\Phi_{\F}: \F \rightarrow \mathbf{Hom}_{\mathrm{qc}}^\bullet(\DD,
\DD\otimes_{\CO_X}^\bullet\F)$ is a pure  quasi-isomorphism.
\end{theorem}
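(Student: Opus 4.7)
The plan is to reduce the statement, in two stages, to a classical fact of Avramov--Foxby. First, observe that the mapping cone of $\Phi_\F$ is a bounded complex of flat $\CO_X$-modules: the terms of $\F$ are flat by hypothesis, while the terms of $\mathbf{Hom}^\bullet_{\mathrm{qc}}(\DD,\DD\otimes^\bullet_{\CO_X}\F)$ are (pure injective) flat by Lemma~\ref{Flat}. A bounded acyclic complex of flat $\CO_X$-modules is automatically pure acyclic: checking stalkwise, the cocycles inherit flatness by descending induction along the short exact sequences $0\lrt Z^{n-1}\lrt C^{n-1}\lrt Z^{n}\lrt 0$, using the long exact Tor sequence. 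Consequently, it suffices to show that $\Phi_\F$ is an ordinary quasi-isomorphism.

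Next, I would induct on the number of non-zero terms of $\F$. For the inductive step, any bounded $\F$ fits into a degreewise split short exact sequence
\begin{align*}
0\lrt \sigma^{>n}\F\lrt \F\lrt \sigma^{\le n}\F\lrt 0
\end{align*}
coming from the hard (stupid) truncation, with both sub- and quotient complexes of strictly smaller length. Since this sequence is degreewise split, applying $\DD\otimes^\bullet_{\CO_X}-$ followed by $\mathbf{Hom}^\bullet_{\mathrm{qc}}(\DD,-)$ preserves it, and $\Phi$ then yields a morphism of distinguished triangles in $\K(\mathfrak{Qco}X)$. The triangulated five-lemma reduces the problem to the base case in which $\F$ is a single flat $\CO_X$-module $\CF$ concentrated in one degree.

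For the base case, I would argue locally. On any affine open $U=\mathrm{Spec}R$, we have $\CF|_U=\widetilde{M}$ for a flat $R$-module $M$, and $\DD|_U=\widetilde{D^\bullet}$ for a dualizing complex $D^\bullet$ of $R$ (a bounded complex of injective $R$-modules with finitely generated cohomology). Under the symmetric monoidal equivalence $\mathfrak{Qco}U\simeq\mathrm{Mod}\,R$, the restriction $\Phi_\CF|_U$ is identified with the Avramov--Foxby map $\delta_M\colon M\lrt \mathbf{Hom}^\bullet_R(D^\bullet,D^\bullet\otimes^\bullet_R M)$, which is a quasi-isomorphism by \cite[Section~2]{AF97}. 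The main obstacle is this local identification, since $\mathbf{Hom}^\bullet_{\mathrm{qc}}$ on $X$ is built via the coherator $Q$ and its interaction with restriction is not a priori transparent; I expect it to follow by exactly the mechanism used in the proofs of Theorems~\ref{mainf} and~\ref{hom01325871}, namely by decomposing the relevant injective $\CO_X$-modules as direct sums of $(i_x)_*\widetilde{J(x)}$ and invoking Lemma~\ref{mes1000} to commute quasi-coherent products past restriction to $U$. Once $\Phi_\CF|_U$ is identified with $\delta_M$, the local quasi-isomorphism property globalizes, concluding the proof.
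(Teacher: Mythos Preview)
Your overall strategy is sound and matches the paper's: reduce ``pure quasi-isomorphism'' to ``quasi-isomorphism'' using that $\mathrm{Cone}(\Phi_\F)$ is bounded with flat terms (the paper's final sentence does exactly this via Lemma~\ref{Flat} and Proposition~\ref{elh20}), and then invoke Avramov--Foxby locally. The induction on length via hard truncations is correct but unnecessary; the paper treats the whole bounded complex at once.

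The genuine gap is in your base case, precisely at the point you flag as ``the main obstacle''. The mechanism of Theorems~\ref{mainf} and~\ref{hom01325871} does \emph{not} give you the identification $(\Phi_\CF)|_U\cong\delta_M$. What those proofs establish is that $\mathcal{H}om_{\mathrm{qc}}(\CI,\CK)$ is a \emph{direct summand} of a quasi-coherent product whose restriction to $U$ is explicitly computable via Lemma~\ref{mes1000}; this suffices to transfer qualitative properties closed under summands (flatness, exactness), but it does not pin down $\mathcal{H}om_{\mathrm{qc}}(\CI,\CK)|_U$ itself, let alone identify it functorially with $\widetilde{\Hom_R(\CI(U),\CK(U))}$ compatibly with $\Phi$. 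Since the coherator $Q$ does not in general commute with restriction to opens, this step cannot be completed as you outline.

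The paper's workaround is to avoid localizing $\mathbf{Hom}^\bullet_{\mathrm{qc}}(\DD,-)$ altogether. Using Theorem~\ref{hom01325871} and the Acyclic Assembly Lemma, one replaces $\DD$ in the first slot by its cohomology $H(\DD)$, obtaining an isomorphism $\mu$ in $\D(X)$ between $\mathbf{Hom}^\bullet_{\mathrm{qc}}(\DD,\DD\otimes^\bullet_{\CO_X}\F)$ and $\mathbf{Hom}^\bullet_{\CO_X}(H(\DD),\DD\otimes^\bullet_{\CO_X}\F)$. The point is that $H(\DD)$ is \emph{coherent}, so $\mathcal{H}om_{\mathrm{qc}}(H(\DD),-)=\mathcal{H}om_{\CO_X}(H(\DD),-)$ and this commutes with taking stalks. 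One can then compute $\mathrm{Cone}(\mu\Phi_\F)_x$ directly and compare with Avramov--Foxby over the local ring $\CO_x$, yielding acyclicity of $\mathrm{Cone}(\mu\Phi_\F)$ and hence of $\mathrm{Cone}(\Phi_\F)$. Replacing your attempted local identification by this passage through $H(\DD)$ closes the gap.
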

\begin{proof}
Consider the following commutative diagram of distinguished
triangles in $\D(X)$

\begin{align}\label{bound11397}
\xymatrix{\F\ar[r]\ar@{=}[d]&
\mathbf{Hom}_{\mathrm{qc}}^\bullet(\DD,
\DD\otimes_{\CO_X}^\bullet\F)\ar[r]\ar[d]_\mu&\mathrm{Cone}(\Phi_\F)\ar[r]\ar[d]_{\gamma}&\Sigma\F\ar@{=}[d]\\
\F\ar[r]&\mathbf{Hom}_{\CO_X}^\bullet(H(\DD),
\DD\otimes_{\CO_X}^\bullet\F)\ar[r]&\mathrm{Cone}(\mu\Phi_\F)\ar[r]&\Sigma\F}
\end{align}where H$(\DD)$ is the cohomology of $\DD$ ($\DD$ has coherent cohomology). By the injectivity of
$\DD\otimes_{\CO_X}^\bullet\F$, Lemma \ref{hom01325871} and Acyclic
Assembly Lemma, we deduce that
$\mathbf{Hom}_{\mathrm{qc}}^\bullet(\textmd{-},
\DD\otimes_{\CO_X}^\bullet\F)$ preserves quasi-isomorphism. Hence
$\mu$ is an isomorphism in $\D(X)$. So, $\gamma$ is also an
isomorphism in $\D(X)$. In the other hand, for any $x\in X$, we have
the following commutative diagram in $\D(\CO_x)$,

\[\xymatrix{\F_x\ar[r]\ar@{=}[d]&
\mathbf{Hom}_{\CO_x}^\bullet(\DD_x,
\DD_x\otimes_{\CO_x}^\bullet\F_x)\ar[r]\ar[d]^\cong&\mathrm{Cone}(\delta_{(\F_x)})\ar[r]\ar[d]_{\cong}&\Sigma\F_x\ar@{=}[d]\\
\F_x\ar[r]&\mathbf{Hom}_{\CO_x}^\bullet(H(\DD)_x,
\DD_x\otimes_{\CO_x}^\bullet\F_x)\ar[r]&\mathrm{Cone}(\mu\Phi_{(\F)})_x\ar[r]&\Sigma\F_x}\]where
$\mathrm{Cone}(\delta_{\F_x})$ is an acyclic complex of
$\CO_x$-modules.  This shows that $\mathrm{Cone}(\mu\Phi_\F)$ is an
acyclic complex and hence by diagram \eqref{bound11397},
$\mathrm{Cone}(\Phi_\F)$ is a bounded acyclic complex of flat
$\CO_X$-module. This follows by Lemma \ref{Flat} and Proposition
\ref{elh20} that $\mathrm{Cone}(\Phi_\F)$ is pure acyclic.
\end{proof}

\begin{theorem}\label{bound19806}
Let $\F$ be a complex of  flat $\CO_X$-modules. Then, $ \Phi_{\F}:
\F \rightarrow \mathbf{Hom}_{\mathrm{qc}}^\bullet(\DD,
\DD\otimes_{\CO_X}^\bullet\F)$ is a pure  quasi-isomorphism.
\end{theorem}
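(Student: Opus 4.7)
The plan is to reduce the unbounded case to Theorem \ref{bound1} by exploiting that $\DD$ is bounded: although $\F$ may be unbounded, the components and differentials of $\mathrm{Cone}(\Phi_\F)$ near any fixed degree $n$ depend only on a finite window of $\F$, so they coincide with those of a suitable bounded brutal truncation of $\F$, where the bounded case applies.

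Concretely, say $\DD$ is concentrated in degrees $[a,b]$, set $L:=2(b-a)+2$, and fix $n\in\Z$. Reading off the formulas
\[(\DD\otimes^\bullet_{\CO_X}\F)^m=\bigoplus_{i=a}^{b}\DD^i\otimes_{\CO_X}\F^{m-i},\qquad \mathbf{Hom}_{\mathrm{qc}}^\bullet(\DD,\DD\otimes^\bullet_{\CO_X}\F)^m=\prod_{i=a}^{b}{\mathcal{H}}om_{\mathrm{qc}}(\DD^i,(\DD\otimes^\bullet_{\CO_X}\F)^{m+i}),\]
one checks that $\mathrm{Cone}(\Phi_\F)^m$ for $m\in\{n-1,n,n+1\}$ together with the differentials $d^{n-1}$ and $d^n$ depend only on the $\F^j$ and $d^j_\F$ with $j\in[n-L,n+L]$. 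For any $N>|n|+L$, let $\tilde{\F}$ be the bounded complex obtained by setting $\F^j=0$ for $|j|>N$ and retaining the remaining differentials; it is a bounded complex of flat $\CO_X$-modules agreeing with $\F$ in degrees $[-N,N-1]$, so $\mathrm{Cone}(\Phi_{\tilde{\F}})$ and $\mathrm{Cone}(\Phi_\F)$ coincide verbatim in degrees $n-1$, $n$, $n+1$.

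By Theorem \ref{bound1}, $\mathrm{Cone}(\Phi_{\tilde{\F}})$ is pure acyclic; in the course of that proof, Proposition \ref{elh20} together with Lemma \ref{mainf670011} ensure (by a reverse induction from the top term) that all cycles of $\mathrm{Cone}(\Phi_{\tilde{\F}})$ are flat. The agreement in degrees $n-1,n,n+1$ therefore yields $H^n(\mathrm{Cone}(\Phi_\F))=0$ and $Z^n(\mathrm{Cone}(\Phi_\F))=Z^n(\mathrm{Cone}(\Phi_{\tilde{\F}}))$ flat. Since $n$ was arbitrary, $\mathrm{Cone}(\Phi_\F)$ is an acyclic complex of flat $\CO_X$-modules whose cycles are all flat, and Proposition \ref{elh20} then gives that every short exact sequence $0\to Z^n\to\mathrm{Cone}(\Phi_\F)^n\to Z^{n+1}\to 0$ is pure. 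Hence $\Phi_\F$ is a pure quasi-isomorphism. The only substantive obstacle is the bookkeeping required to verify the finite-window dependence of $\mathrm{Cone}(\Phi_\F)$ near each degree; once that is settled (and it uses nothing beyond the boundedness of $\DD$), the unbounded statement reduces to a direct appeal to Theorem \ref{bound1}.
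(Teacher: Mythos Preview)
Your proposal is correct and follows essentially the same approach as the paper: both exploit the boundedness of $\DD$ to argue that, in each degree, $\mathrm{Cone}(\Phi_\F)$ agrees with $\mathrm{Cone}(\Phi_{\F^k})$ for a suitable brutal truncation $\F^k$, and then invoke Theorem~\ref{bound1}. Your version is more explicit about the window size and about why the cycles are flat, but the underlying strategy is identical.
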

\begin{proof}
Let $\F=(\CF^n,\pa_\F^n)$ be a complex of flat  $\CO_X$-modules. For
each $k\geq 1$, let

\[\xymatrix@C-0.7pc@R-0.9pc{\F^k:\cdots\ar[r]&0\ar[r]&0\ar[r]&\CF^{-k}\ar[r]&\CF^{-k+1}
\ar[r]&\cdots\ar[r]&\CF^{k-1}\ar[r]&\CF^k\ar[r]&0\ar[r]&0\ar[r]&\cdots}\]and
$ \Phi_{\F^k}: \F^k \rightarrow
\mathbf{Hom}_{\mathrm{qc}}^\bullet(\DD,
\DD\otimes_{\CO_X}^\bullet\F^k)$. Since $\DD$ is a fixed bounded
complex then in the complex $(\mathrm{Cone}\Phi_{\F},\delta^n)$, the
$\ker\delta^n$ (resp. $\mathrm{im}\delta^n$) can be appeared in the
bounded complex  $(\mathrm{Cone}\Phi_{\F^k},\delta_k^n)$ for some
$k$. But, by Theorem \ref{bound1}, $\mathrm{Cone}\Phi_{\F^k}$ is a
pure acyclic complex of flat $\CO_X$-modules, then, we deduce that
$\mathrm{Cone}\Phi_{\F}$ is pure acyclic.
\end{proof}

We arrive now at the main theorem of this article.

\begin{theorem}\label{t6}
There is a pair of equivalences
\begin{align}\label{chart6}
\xymatrix@C+3pc@R+3pc{\KPIFX
\ar@<.75ex>[r]^{\DD\otimes^\bullet_{\CO_X}-}&
 \KIX \ar@<0.5ex>[l]^{\mathbf{Hom}_{\mathrm{qc}}^\bullet(\DD,-)} }
\end{align}
of triangulated categories.
\end{theorem}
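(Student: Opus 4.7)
The plan is to exhibit $F=\DD\otimes^\bullet_{\CO_X}-$ and $G=\mathbf{Hom}_{\mathrm{qc}}^\bullet(\DD,-)$ as an adjoint pair whose unit and counit are both isomorphisms at the level of homotopy categories, so that $F$ and $G$ are mutually quasi-inverse equivalences between $\KPIFX$ and $\KIX$.

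First I would verify that the two functors restrict correctly. For $\F\in\KPIFX$, the complex $\DD\otimes^\bullet_{\CO_X}\F$ is a complex of injectives: by Proposition \ref{mes1} this can be checked stalkwise, reducing to the classical fact that over a commutative noetherian local ring the tensor product of an injective with a flat module is injective. For $\CG\in\KIX$, the complex $\mathbf{Hom}_{\mathrm{qc}}^\bullet(\DD,\CG)$ lies in $\KPIFX$ by exactly the reasoning used in Lemma \ref{Flat}, namely Theorem \ref{mainf} together with Proposition \ref{z440048971} (recall $\DD$ is bounded, so each component of the Hom complex is a finite product of terms of the form $\mathcal{H}om_{\mathrm{qc}}(\DD^i,\CG^j)$).

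Next, I would prove that the unit $\Phi_\F\colon \F\to G(F(\F))$ is a homotopy equivalence for every $\F\in\KPIFX$. Theorem \ref{bound19806} already gives that $\Phi_\F$ is a pure quasi-isomorphism, and both its source and target lie in $\KPIFX$, so $\mathrm{Cone}(\Phi_\F)$ is a pure acyclic complex of pure injective flat $\CO_X$-modules. The key auxiliary fact I would establish is that any such complex is contractible: the cycles are flat by Lemma \ref{mainf670011}, each induced short exact sequence $0\to Z^n\to C^n\to Z^{n+1}\to 0$ is pure, and since $C^n$ is cotorsion by Proposition \ref{mainf6700} the sequence splits; the splittings assemble into a contracting homotopy. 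For the counit $\varepsilon_\CG\colon F(G(\CG))\to\CG$ on $\CG\in\KIX$, the triangle identity $\varepsilon_{F(\F)}\circ F(\Phi_\F)=\mathrm{id}_{F(\F)}$ combined with the previous step immediately gives that $\varepsilon$ is an isomorphism on the essential image of $F$. To handle arbitrary $\CG$, I would check stalkwise that $\varepsilon_\CG$ is a quasi-isomorphism, using that each $\DD_x$ is a dualizing complex over the noetherian local ring $\CO_x$ (this is the Iyengar--Krause identity $\DD_x\otimes^\bullet_{\CO_x}\mathbf{Hom}^\bullet_{\CO_x}(\DD_x,\CG_x)\simeq \CG_x$ for complexes of injectives), and then promote this quasi-isomorphism to a homotopy equivalence in $\KIX$ by using the Matlis-type decomposition of Proposition \ref{mes101100985624} together with the boundedness of $\DD$ to split $\mathrm{Cone}(\varepsilon_\CG)$ term by term.

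The main obstacle is this last promotion: acyclic complexes of injective $\CO_X$-modules are not contractible in general (the category $\KACX$ is nontrivial), so one cannot invoke the naive "acyclic implies contractible" principle on the injective side. The whole point of passing to $\KPIFX$ on the source is that its analogue of this principle (pure acyclic complex of pure injective flats is contractible) does hold, which is what makes the unit side work cleanly; controlling the counit side requires exploiting the very specific form of $\mathrm{Cone}(\varepsilon_\CG)$ that the dualizing complex forces, via the structural description of injective quasi-coherent sheaves.
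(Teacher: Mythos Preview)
Your treatment of the unit $\Phi_\F$ coincides with the paper's proof: the paper invokes Theorem~\ref{bound19806} to get pure acyclicity of $\mathrm{Cone}(\Phi_\F)$ and then cites \cite[Theorem 3.6]{Ho17} for contractibility of pure acyclic complexes of cotorsion flats, which is exactly the splitting argument you spell out directly. In fact the paper's written proof stops there and treats only the unit; it does not address the counit explicitly. So on that half you are already supplying more than the published argument.

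For the counit your overall strategy is sound, but two steps need tightening. First, the stalkwise identification $(\mathbf{Hom}_{\mathrm{qc}}^\bullet(\DD,\CG))_x\simeq\mathbf{Hom}_{\CO_x}^\bullet(\DD_x,\CG_x)$ is not automatic: $\DD$ is a complex of injectives rather than coherents, and $\mathcal{H}om_{\mathrm{qc}}$ passes through the coherator. You need the detour through the coherent complex $H(\DD)$ exactly as in the proof of Theorem~\ref{bound1} before you can localize. Second, the Matlis-type decomposition of Proposition~\ref{mes101100985624} is not the tool that does the work in the promotion step. What actually succeeds is the mirror image of Theorems~\ref{bound1}--\ref{bound19806} on the injective side: for bounded $\CG$ the cone of $\varepsilon_\CG$ is a bounded acyclic complex of injectives, hence contractible with injective cycles; then boundedness of $\DD$ lets you run the same truncation argument as in Theorem~\ref{bound19806}, so that each cycle of $\mathrm{Cone}(\varepsilon_\CG)$ for unbounded $\CG$ already occurs as a cycle in some bounded truncated cone and is therefore injective, forcing the whole cone to split. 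Equivalently, one shows $\mathrm{Cone}(\varepsilon_\CG)$ is pure acyclic and then appeals to Lemma~\ref{t629}.
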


\begin{proof}
Consider the morphism $\Phi_{\F}:\F\lrt
\mathbf{Hom}_{\mathrm{qc}}^\bullet(\DD,\DD\otimes_{\CO_X}^\bullet\F)$
of complexes of pure injective flat $\CO_X$-modules. By Theorem
\ref{bound19806}, Cone$(\Phi_{\F})$ is a pure acyclic complex of
pure injective flat $\CO_X$-modules. We conclude by \cite[Theorem
3.6]{Ho17} that Cone$(\Phi_{\F})$ is contractible and hence
$\Phi_{\F}$ is an isomorphism in $\KPIFX$.
\end{proof}

\subsection{The affine case}
In this subsection, we assume that  $X=\mathrm{Spec}R$ is an affine
scheme with a dualizing complex $\mathbf{D}$. We show that
\ref{chart6} is the infinite completion of the Grothendieck duality
theorem.

\begin{corollary}\label{t1}
Let $R$ be a noetherian ring with a dualizing complex $\mathbf{D}$.
Then, there is a pair of equivalences
\begin{align}\label{t100}
\xymatrix@C+3pc@R+2pc{\KPIFR
\ar@<0.75ex>[r]^{\mathbf{D}\otimes^\bullet_R\texttt{-}} &
 \KIR \ar@<.5ex>[l]^{{\mathrm{H}}om^\bullet_R(\mathbf{D},\texttt{-})} }
\end{align}
of triangulated  categories.
\end{corollary}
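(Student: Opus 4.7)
The proof plan is to deduce Corollary 4.5 as a direct specialization of Theorem 4.4 to the case $X=\mathrm{Spec}\,R$, using the classical equivalence $\widetilde{(-)}\colon R\text{-Mod}\xrightarrow{\sim}\mathfrak{Qco}(\mathrm{Spec}\,R)$ with quasi-inverse given by global sections. First I would observe that a dualizing complex $\mathbf{D}$ for $R$ corresponds, under this equivalence, to a dualizing complex $\DD=\widetilde{\mathbf{D}}$ on $X=\mathrm{Spec}\,R$, so that Theorem 4.4 applies and yields an equivalence $\KPIFX\simeq\KIX$ implemented by $\DD\otimes^\bullet_{\CO_X}-$ and $\mathbf{Hom}_{\mathrm{qc}}^\bullet(\DD,-)$.

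Next I would transport everything along $\widetilde{(-)}$. The equivalence is symmetric monoidal, so it restricts to equivalences of the relevant subcategories: Proposition 3.1 gives $\Inj R\leftrightarrow \Inj(\CO_X)$, and since $\widetilde{M\otimes_R N}\cong \widetilde{M}\otimes_{\CO_X}\widetilde{N}$, Proposition 2.5 transports $\Flat R\leftrightarrow \Flat(\CO_X)$ and preserves purity (a sequence of $R$-modules is pure iff its sheafification is pure), hence pure injective flats match pure injective flats. Passing to homotopy categories gives $\KIX\simeq\KIR$ and $\KPIFX\simeq\KPIFR$. On the functorial side, $\widetilde{\mathbf{D}\otimes^\bullet_R\F}\cong\DD\otimes^\bullet_{\CO_X}\widetilde{\F}$ is immediate; for the right adjoint one uses the identification $\mathcal{H}om_{\mathrm{qc}}(\widetilde{I},\widetilde{J})\cong\widetilde{\mathrm{Hom}_R(I,J)}$ already recorded in the paper before Theorem 3.7, which globalizes componentwise to complexes of injectives and shows that $\mathbf{Hom}_{\mathrm{qc}}^\bullet(\DD,-)$ corresponds to $\mathrm{Hom}^\bullet_R(\mathbf{D},-)$.

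Combining these identifications turns the equivalence of Theorem 4.4 into the asserted equivalence \eqref{t100}. The only thing that requires a moment's care, and is probably the main point to double-check, is the compatibility of the coherator with $\widetilde{(-)}$: on $\mathrm{Spec}\,R$ one needs $Q(\mathcal{H}om_{\CO_X}(\widetilde{I},\widetilde{J}))\cong\widetilde{\mathrm{Hom}_R(I,J)}$ for the complexes of injectives involved. This is standard (and is exactly what the paper cites from \cite{TT} just before Theorem 3.7), so no new work is needed. With these identifications in hand the corollary is immediate.
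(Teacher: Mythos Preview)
Your proposal is correct and follows exactly the paper's approach: the paper's entire proof reads ``A direct consequence of Theorem \ref{t6}.'' You have simply spelled out the transport along the equivalence $\widetilde{(-)}:R\text{-Mod}\simeq\mathfrak{Qco}(\mathrm{Spec}\,R)$ that the paper leaves implicit, and the identifications you invoke (injectives, flats, purity, $\otimes$, and $\mathcal{H}om_{\mathrm{qc}}$ via \cite{TT}) are precisely the standard ones needed to make that one-line deduction rigorous.
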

\begin{proof}
A direct consequence of Theorem \ref{t6}.
\end{proof}

Assume that  $F:\CT'\lrt\CT$ is a triangulated  functor of
triangulated categories. The \textit{essential} \textit{image} of
$F$ in $\CT$ is the full subcategory of $\CT$ formed by all objects
which are isomorphic to $F(\mathbf{T}')$ for some
$\mathbf{T}'\in\CT'$.

\begin{corollary}
Let $R$ be a noetherian ring with a dualizing complex $\mathbf{D}$.
Then, there is a pair of equivalences
\begin{align}\label{t1001}
\xymatrix@C+3pc@R+2pc{\KPR
\ar@<0.75ex>[r]^{\mathbf{D}\otimes^\bullet_R\texttt{-}} &
 \KIR. \ar@<.5ex>[l]^{\mathrm{Hom}^\bullet_R(\mathbf{D},\texttt{-})} }
\end{align}
of triangulated  categories.
\end{corollary}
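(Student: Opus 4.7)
The plan is to deduce this corollary from Corollary~\ref{t1} by constructing a triangle equivalence $\KPR \simeq \KPIFR$ compatible with the tensor functor $\mathbf{D}\otimes^\bullet_R\texttt{-}$. First, I would observe that $\mathbf{D}\otimes^\bullet_R\texttt{-}$ is well defined as a functor $\KPR \to \KIR$ because projective modules are flat and, over a noetherian ring, the tensor product of a flat module with an injective module is again injective. Hence we have a candidate functor; the task is to identify it, under the equivalence $\KPR \simeq \KPIFR$, with the equivalence of Corollary~\ref{t1}.

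To obtain $\KPR \simeq \KPIFR$, I would pass through the Verdier quotient $\KFR/\KPFR$ of the homotopy category of flats modulo its triangulated subcategory of pure acyclic complexes. Neeman's results from \cite{Ne08, Ne10} identify $\KPR$ with this quotient via the canonical inclusion $\KPR \hookrightarrow \KFR$. The parallel identification $\KPIFR \simeq \KFR/\KPFR$ rests on two ingredients already available in this paper: on the one hand, \cite[Theorem 3.6]{Ho17} (invoked in the proof of Theorem~\ref{t6}) says that every pure acyclic complex of pure injective flats is contractible, so the composite $\KPIFR \hookrightarrow \KFR \twoheadrightarrow \KFR/\KPFR$ is fully faithful; on the other hand, the pure embedding $\CF \hookrightarrow \CF^{++}$ from the preliminaries together with Proposition~\ref{mainf6700} (cotorsion flat equals pure injective flat) allows one to construct, termwise and then by splicing, a pure quasi-isomorphism from any complex of flats into a complex of pure injective flats, yielding essential surjectivity.

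Composing yields $\KPR \simeq \KFR/\KPFR \simeq \KPIFR \simeq \KIR$, and a direct inspection on representatives shows that the composite equivalence is naturally isomorphic to $\mathbf{D}\otimes^\bullet_R\texttt{-}$, with quasi-inverse obtained by following $\Hom^\bullet_R(\mathbf{D},\texttt{-})$ with the equivalence $\KPIFR \simeq \KPR$. The main obstacle is the identification $\KPR \simeq \KFR/\KPFR$, which is not developed in the present paper and requires appealing to Neeman's machinery on the homotopy category of flat modules; once this is granted, compatibility of the equivalences with the tensor functor follows from the observation that $\mathbf{D}\otimes^\bullet_R\texttt{-}$ carries pure quasi-isomorphisms of flat complexes to quasi-isomorphisms of injective complexes, which is essentially already encoded in Theorem~\ref{bound19806}.
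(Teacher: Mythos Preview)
Your proposal is correct and follows the same overall strategy as the paper: identify $\KPR$ with $\KPIFR$ and then invoke the main equivalence (Corollary~\ref{t1}/Theorem~\ref{t6}). The difference lies in how the identification $\KPR \simeq \KPIFR$ is obtained. The paper cites \cite[Theorem~1.2]{HS13} directly, which asserts that $\KCOFR = \KPIFR$ is the essential image of $\KPR$ inside $\KFR$; from this, any complex of projectives is isomorphic in $\KFR$ to a complex of pure injective flats, and the argument of Theorem~\ref{t6} applies verbatim. You instead route through the Verdier quotient $\KFR/\KPFR$, using Neeman's equivalence $\KPR \simeq \KFR/\KPFR$ on one side and (essentially) \cite[Proposition~3.7]{Ho17} on the other. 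Both approaches rely on an external input not proved in the present paper; the paper's is a single citation to the author's earlier work, while yours is slightly more elaborate but has the advantage of making the passage through $\DPFR = \KFR/\KPFR$ explicit, which ties in cleanly with Section~5.
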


\begin{proof}
By \cite[Theorem 1.2]{HS13}, $\KPIFR=\KCOFR$ is the essential image
of $\KPR$ in $\KFR$. So, in $\KFR$, any complex of projective
$R$-modules is isomorphic to a complex of pure injective flat
$R$-modules. Therefore, by  a similar argument as in the proof of
Theorem \ref{t6} we deduce the desired equivalence of triangulated
categories.

\end{proof}

\section{Pure derived categories of $\CO_X$-modules}
Recall that an $\CO_X$-module $\CF$ is called \textit{absolutely}
\textit{pure} if any monomorphism $\CF\lrt\CG$ of $\CO_X$-modules is
pure. Let $\mathrm{Abs}X$ be the class of all absolutely pure
$\CO_X$-modules, $\KABX$ be its homotopy category and $\KPABX$ be
the full subcategory of $\KABX$ consisting of all pure acyclic
complexes. Let $\DPABX=\KABX/\KPABX$ be the pure derived category of
absolutely pure $\CO_X$-modules. In this section, we show that
$$\xymatrix@C+3pc@R+3pc{\DPFX \ar[r]^{\DD\otimes^\bullet_{\CO_X}-}&
 \DPABX }$$ is an equivalence of triangulated categories where
 $\DPFX$ is the pure derived category of flat $\CO_X$-modules.

\begin{lemma}\label{t6292001331}
An $\CO_X$-module $\CF$ is absolutely pure if and only if it is  a
pure submodule of an injective $\CO_X$-module.
\end{lemma}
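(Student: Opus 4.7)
The plan is to prove the two implications separately, exploiting the symmetric monoidal structure on $\mathfrak{Qco}X$ and the fact that $\mathfrak{Qco}X$ has enough injectives.

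For the ``only if'' direction, I would fix an arbitrary absolutely pure $\CO_X$-module $\CF$ and embed it in an injective object $\CI$ of $\mathfrak{Qco}X$; such an $\CI$ exists because $\mathfrak{Qco}X$ is a Grothendieck category. By the very definition of absolutely pure, this monomorphism $\CF\hookrightarrow\CI$ is pure, so $\CF$ is a pure submodule of an injective.

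For the ``if'' direction, assume we have a pure monomorphism $\iota:\CF\hookrightarrow\CI$ with $\CI$ injective, and let $\alpha:\CF\hookrightarrow\CG$ be an arbitrary monomorphism of $\CO_X$-modules. The key trick is that injectivity of $\CI$ produces a morphism $\beta:\CG\lrt\CI$ with $\beta\alpha=\iota$. Now for any $\CO_X$-module $\CH$, tensor the factorization $\iota=\beta\alpha$ with $\CH$ to get
\begin{align*}
\CF\otimes_{\CO_X}\CH\st{\alpha\otimes\id}{\lrt}\CG\otimes_{\CO_X}\CH\st{\beta\otimes\id}{\lrt}\CI\otimes_{\CO_X}\CH,
\end{align*}
whose composite $\iota\otimes\id$ is a monomorphism because $\iota$ is pure. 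Hence $\alpha\otimes\id$ is a monomorphism as well, which exactly says that the short exact sequence $0\lrt\CF\lrt\CG\lrt\CG/\CF\lrt 0$ is pure. Therefore $\CF$ is absolutely pure.

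I do not anticipate a genuine obstacle: the argument is a direct monoidal-category adaptation of the classical module-theoretic fact, and the only ingredients required are (i) enough injectives in $\mathfrak{Qco}X$ and (ii) the definition of purity via the tensor product, both of which are set up in the preliminaries. The only minor subtlety is to make sure we use the internal tensor on $\mathfrak{Qco}X$ (so that $\otimes_{\CO_X}$ is right exact and the purity definition applies), but this is immediate from the symmetric closed monoidal structure recalled in Section~2.
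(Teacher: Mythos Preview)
Your argument is correct and is exactly the standard ``straightforward'' proof the paper alludes to: embed in an injective for one direction, and for the other factor the given pure embedding $\iota$ through any monomorphism $\alpha$ using injectivity of $\CI$, then observe that $\alpha\otimes\id$ inherits injectivity from $\iota\otimes\id$. There is nothing to add.
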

\begin{proof}
The proof is straightforward.
\end{proof}

Obviously, pure injective absolutely pure $\CO_X$-modules are
injective. In the next result, as a generalization of \cite[Theorem
3]{Me70}, we give a characterization of locally noetherian schemes.
\begin{theorem}\label{t62921}
The scheme $X$ is locally noetherian if and only if any absolutely
pure $\CO_X$-module is injective.
\end{theorem}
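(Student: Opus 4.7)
The plan is to prove both directions by reducing to the affine case and applying the classical theorem of Megibben: a commutative ring $R$ is noetherian if and only if every absolutely pure $R$-module is $R$-injective.

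For the forward direction, suppose $X$ is locally noetherian and $\CF$ is an absolutely pure $\CO_X$-module. By Lemma~\ref{t6292001331} there is a pure monomorphism $\CF \hookrightarrow \CI$ with $\CI$ injective in $\mathfrak{Qco}X$. For any affine open $U = \Spec R$ in a cover by noetherian affines, restriction to $U$ is exact and compatible with the tensor product, so $\CF|_U \hookrightarrow \CI|_U$ is again a pure monomorphism in $\mathfrak{Qco}U$. By Proposition~\ref{mes1}, $\CI(U)$ is an injective $R$-module, whence $\CF(U)$ is a pure submodule of an injective $R$-module and is therefore absolutely pure over $R$. Megibben's theorem, applied to the noetherian ring $R$, yields that $\CF(U)$ is injective over $R$. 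A second application of Proposition~\ref{mes1} then gives that $\CF$ is injective on $X$.

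For the backward direction, I argue by contrapositive. Assume $X$ is not locally noetherian, so some affine open $U = \Spec R$ of $X$ has $R$ non-noetherian; by Megibben's theorem pick an absolutely pure $R$-module $M$ that is not $R$-injective. Let $j \colon U \hookrightarrow X$ be the open immersion and set $\CG := j_* \widetilde{M}$. I claim $\CG$ is absolutely pure but not injective. Non-injectivity is immediate from Proposition~\ref{mes1}: injectivity of $\CG$ on $X$ would force $\CG|_U = \widetilde{M}$ to be injective in $\mathfrak{Qco}U$, hence $M$ to be injective over $R$, contrary to our choice. For absolute purity, take an injective envelope $M \hookrightarrow E$ in $R$-Mod, which is a pure monomorphism; sheafifying yields a pure monomorphism $\widetilde{M} \hookrightarrow \widetilde{E}$ with $\widetilde{E}$ injective in $\mathfrak{Qco}U$. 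Since $j^*$ is exact, its right adjoint $j_*$ preserves injectives, so $j_*\widetilde{E}$ is injective in $\mathfrak{Qco}X$. That $j_*\widetilde{M} \hookrightarrow j_*\widetilde{E}$ remains pure on $X$ is verified via the tensor-product criterion for purity together with the projection formula for the flat open immersion $j$. Lemma~\ref{t6292001331} then gives that $\CG$ is absolutely pure, contradicting the hypothesis.

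The main obstacle I expect is the verification in the backward direction that $j_*$ carries the pure monomorphism $\widetilde{M} \hookrightarrow \widetilde{E}$ in $\mathfrak{Qco}U$ to a pure monomorphism in $\mathfrak{Qco}X$. This requires combining the flatness of the open immersion with a compatibility between $j_*$ and the tensor product (the projection formula), together with the characterization of purity via tensor products against arbitrary quasi-coherent sheaves. Once this is in place, both implications become routine applications of Megibben's theorem together with Proposition~\ref{mes1} and Lemma~\ref{t6292001331}.
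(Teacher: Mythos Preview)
Your forward direction matches the paper's argument exactly. The backward direction follows the same overall strategy as the paper (push an absolutely pure, non-injective $R$-module $M$ forward along the open immersion $j:U\hookrightarrow X$ and invoke Lemma~\ref{t6292001331}), but the crucial step---showing that $j_*$ sends the pure monomorphism $\widetilde{M}\hookrightarrow\widetilde{E}$ in $\mathfrak{Qco}U$ to a pure monomorphism in $\mathfrak{Qco}X$---is handled differently, and your version has a gap.

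The projection formula $j_*(\CF)\otimes_{\CO_X}\CH \cong j_*(\CF\otimes_{\CO_U} j^*\CH)$ that you invoke is only available for \emph{locally free of finite rank} $\CH$ (or, in derived form, for perfect complexes); it fails for arbitrary quasi-coherent $\CH$, which is exactly what the tensor criterion for purity requires. Flatness of the open immersion $j$ does not help here: the obstruction is that $j_*$ need not commute with tensoring by an arbitrary $\CH$. In particular, without separatedness of $X$ the morphism $j$ need not even be affine, so $j_*$ is only left exact and the usual devices for proving projection-type isomorphisms are unavailable. Since you yourself flag this step as the main obstacle, this is the place where the argument breaks.

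The paper avoids this problem entirely by using a different characterization of purity: pure monomorphisms in $\mathfrak{Qco}U\simeq R\text{-}\Mod$ are filtered colimits of split monomorphisms, and $j_*$ commutes with filtered colimits by \cite[Lemma~B.6]{TT}. Since any additive functor preserves split monomorphisms and filtered colimits of split monomorphisms are again pure, this immediately gives that $j_*(\widetilde{M})\hookrightarrow j_*(\widetilde{E})$ is pure in $\mathfrak{Qco}X$. This argument requires no exactness of $j_*$ and no projection formula. If you replace your projection-formula paragraph with this reasoning, the rest of your proof goes through.
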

\begin{proof}
Assume that $X$ is locally noetherian and $\CF$ is an absolutely
pure $\CO_X$-module. Then, by Lemma \ref{t6292001331}, there is a
pure monomorphism $\CF\lrt\CQ$ where $\CQ$ is an  injective
$\CO_X$-module. So, by Proposition \ref{mes1}, for an affine open
subset $U$ of $X$, there is a pure monomorphism $\CF|_U\lrt\CQ|_U$
where $\CQ|_U$ is injective. Then, by Lemma \ref{t6292001331},
$\CF|_U$ is an absolutely pure $\CO_U$-module and hence by
\cite[Theorem 3]{Me70}, it is injective. Therefore, by Proposition
\ref{mes1}, $\CF$ is injective. Conversely, assume that any
absolutely pure $\CO_X$-module is injective. It is enough to prove
that if $U=\mathrm{Spec}A$  is an affine open subset of $X$, then
$A$ is a noetherian ring. By \cite[Theorem 3]{Me70},  $A$ is
noetherian if and only if any absolutely pure $A$-module is
injective. So, for a given absolutely pure $A$-module $M$, there is
a pure monomorphism $M\lrt Q$ where $Q$ is an injective $A$-module.
Since pure monomorphisms in the category $\mathfrak{Qco}U$ of all
quasi-coherent $\CO_U$-modules are directed colimits of split
monomorphisms, by \cite[Lemma B6. pp.410]{TT},
$j_*(\widetilde{M})\lrt j_*(\widetilde{Q})$ ($j:U\lrt X$ is
inclusion) is a pure monomorphism of $\CO_X$-modules where
$j_*(\widetilde{Q})$ is injective. Then, $j_*(\widetilde{M})$ is
absolutely pure and hence it is injective. So, by proposition
\ref{mes1},  $M$ is injective and the proof completes.
\end{proof}
\begin{corollary}\label{t629210}
$\KIX=\KABX$.
\end{corollary}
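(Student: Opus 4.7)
The plan is to show that the classes of injective and absolutely pure $\CO_X$-modules coincide as subclasses of $\mathfrak{Qco}X$; once that is established, the equality $\KIX = \KABX$ of homotopy categories is immediate, since the two homotopy categories have the same objects and the same morphisms (homotopy classes of chain maps between the same complexes).

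First I would verify the inclusion $\mathrm{Inj}X \subseteq \mathrm{Abs}X$. Let $\CI$ be an injective $\CO_X$-module and let $\CI \hookrightarrow \CG$ be any monomorphism in $\mathfrak{Qco}X$. By injectivity, this monomorphism splits, so the short exact sequence
\begin{equation*}
0 \lrt \CI \lrt \CG \lrt \CG/\CI \lrt 0
\end{equation*}
is split exact. Split exact sequences are preserved by any additive functor, hence by $\textmd{-}\otimes_{\CO_X}\CH$ for every $\CO_X$-module $\CH$, so the sequence is pure. Thus $\CI$ is absolutely pure.

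Second, the reverse inclusion $\mathrm{Abs}X \subseteq \mathrm{Inj}X$ is precisely the content of Theorem \ref{t62921}, whose hypothesis that $X$ be locally noetherian is in force throughout the paper. Combining the two inclusions, the classes of injective and absolutely pure $\CO_X$-modules agree, so the complex categories $\C(\mathrm{Inj}X)$ and $\C(\mathrm{Abs}X)$ coincide, and therefore so do their homotopy categories $\KIX$ and $\KABX$.

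There is no real obstacle here: the nontrivial input has already been absorbed by Theorem \ref{t62921}, and the only point that requires any attention is the easy direction $\mathrm{Inj}X \subseteq \mathrm{Abs}X$, which reduces to the observation that split monomorphisms are pure.
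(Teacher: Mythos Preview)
Your proof is correct and follows exactly the paper's intended route: the paper states this as an immediate corollary of Theorem~\ref{t62921}, relying on the standing hypothesis that $X$ is locally noetherian, and you have simply made both inclusions explicit (the easy direction $\mathrm{Inj}X\subseteq\mathrm{Abs}X$ via splitting, and the converse via Theorem~\ref{t62921}).
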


\begin{lemma}\label{t629}
Any pure acyclic complex of injective $\CO_X$-modules is
contractible.
\end{lemma}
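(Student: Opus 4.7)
The plan is to show that every cycle subobject $Z^n := \ker \partial^n$ of a pure acyclic complex $(\CI^\bullet,\partial^\bullet)$ of injective $\CO_X$-modules is itself injective. Once this is established, the short exact sequences
\[
0 \lrt Z^n \lrt \CI^n \lrt Z^{n+1} \lrt 0
\]
split for every $n$: indeed, by injectivity of $Z^n$ the identity $Z^n \to Z^n$ extends along the inclusion $Z^n \hookrightarrow \CI^n$, producing a retraction. An acyclic complex whose cycle short exact sequences all split is contractible (one assembles the splittings into the desired homotopy), so $\CI^\bullet$ will be contractible.

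To see that $Z^n$ is injective, I would first unpack the meaning of pure acyclicity: it is precisely the statement that each sequence $0 \to Z^n \to \CI^n \to Z^{n+1} \to 0$ is a pure exact sequence of $\CO_X$-modules. In particular, $Z^n$ is a pure submodule of the injective $\CO_X$-module $\CI^n$. By Lemma \ref{t6292001331}, being a pure submodule of an injective object is equivalent to being absolutely pure, so $Z^n$ is absolutely pure. Now I invoke the locally noetherian hypothesis on $X$: Theorem \ref{t62921} upgrades absolute purity to injectivity, giving that $Z^n$ is injective, as required.

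The overall argument is very short, with all substantive ingredients (Lemma \ref{t6292001331} and Theorem \ref{t62921}) already proved in the preceding text. The only place a proof attempt might stumble is the temptation to split the mono $Z^n \hookrightarrow \CI^n$ directly by invoking the pure injectivity of $\CI^n$: pure injectivity of the \emph{target} of a pure mono allows one to extend maps \emph{out of} the source, not to retract monos \emph{into} that target. The detour through absolute purity, made possible by the locally noetherian assumption, is exactly what circumvents this subtlety and delivers injectivity of the cycles, which is strictly what is needed to split the cycle sequences.
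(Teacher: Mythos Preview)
Your proof is correct and follows essentially the same route as the paper: show that each cycle $Z^n=\Ker\partial^n$ is a pure submodule of the injective $\CI^n$, apply Lemma~\ref{t6292001331} to get absolute purity, then Theorem~\ref{t62921} to get injectivity, and conclude contractibility from the splitting of the cycle sequences. Your added remark about the trap of trying to split via pure injectivity of the target is a nice observation, but the core argument is identical to the paper's.
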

\begin{proof}
Let $$\I:\cdots \rt \CI^{n-1} \st{\pa^{n-1}}{\rt} \CI^n
\st{\pa^{n}}{\rt} \CI^{n+1} \rt \cdots$$ be a pure acyclic complex
of injective $\CO_X$-modules. For an arbitrary $n\in\Z$, $\Ker\pa^n$
is a pure submodule of the injective $\CO_X$-module $\CI^n$. By
Lemma \ref{t6292001331}, it is absolutely pure and by Theorem
\ref{t62921}, it is injective. So,  $\I$ is contractible.
\end{proof}

\begin{corollary}\label{t621055}
There is an equivalence  $\KIX\cong\DPABX$ of triangulated
categories.
\end{corollary}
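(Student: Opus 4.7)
The plan is to combine Corollary \ref{t629210} (which identifies $\KIX$ with $\KABX$) with Lemma \ref{t629} (which kills all pure acyclic complexes of injectives up to homotopy) to see that the Verdier quotient defining $\DPABX$ already collapses to $\KABX$ itself.

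First I would unwind the definitions. The category $\DPABX$ is the Verdier quotient $\KABX/\KPABX$, so to produce an equivalence $\KIX \cong \DPABX$ it suffices to argue that the natural localization functor $\KABX \lrt \KABX/\KPABX$ is an equivalence, because $\KIX = \KABX$ by Corollary \ref{t629210}. Equivalently, one has to show that the thick subcategory $\KPABX$ of pure acyclic complexes is already zero inside $\KABX$, i.e.\ that every such complex is isomorphic to $0$ in the homotopy category.

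Next I would verify exactly this vanishing. Let $\F \in \KPABX$. Since $X$ is locally noetherian, Theorem \ref{t62921} tells us that every term of $\F$ is injective, so $\F$ is a pure acyclic complex of injective $\CO_X$-modules. By Lemma \ref{t629} such a complex is contractible, hence represents the zero object in $\KABX$. Thus $\KPABX = 0$ as a triangulated subcategory of $\KABX$, and the Verdier quotient $\KABX/\KPABX$ coincides with $\KABX$. Chaining this with Corollary \ref{t629210} yields the desired equivalence $\KIX \cong \KABX = \KABX/\KPABX = \DPABX$ of triangulated categories.

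There is no real obstacle here: the hard work has been done in Theorem \ref{t62921} (absolutely pure $\Rightarrow$ injective over a locally noetherian scheme) and Lemma \ref{t629} (pure acyclic complexes of injectives are contractible). The only thing to be mildly careful about is that the equivalence is implemented by the obvious composite $\KIX \stackrel{=}{\lrt} \KABX \lrt \DPABX$, so it is in particular triangulated, which is automatic since both the identification from Corollary \ref{t629210} and the localization functor are triangulated.
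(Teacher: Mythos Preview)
Your proposal is correct and follows essentially the same route as the paper: use Corollary~\ref{t629210} (equivalently Theorem~\ref{t62921}) to identify $\KABX$ with $\KIX$, then apply Lemma~\ref{t629} to see that $\KPABX=\K_{\mathrm{pac}}(\mathrm{Inj}X)=0$, so the Verdier quotient $\DPABX=\KABX/\KPABX$ collapses to $\KIX$. The paper's proof is terser but logically identical.
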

\begin{proof}
By Lemma \ref{t629}, $\KPABX=\K_{\mathrm{pac}}(\mathrm{Inj}X)=0$.
So, by Corollary \ref{t629210},  we conclude that
$\DPABX=\KIX/\KPIX\cong\KIX$.
\end{proof}

Let $\DPFX=\KFX/\KPFX$ be the pure derived category of flat
$\CO_X$-modules. By \cite[Proposition 3.7]{Ho17}, there is an
equivalence $\DPFX\lrt\KCOFX=\KPIFX$. If $X$ admits a dualizing
complex $\DD$, then, we have the following equivalences $$
\xymatrix@C+3pc@R+3pc{\KPIFX
\ar@<.75ex>[r]^{\DD\otimes^\bullet_{\CO_X}-}&
 \KIX \ar@<0.5ex>[l]^{\mathbf{Hom}_{\mathrm{qc}}^\bullet(\DD,-)} }$$of triangulated
categories. Note that $\K(\mathrm{Piabs}X)=\KIX$ where
$\K(\mathrm{Piabs}X)$ is the homotopy category of
 pure injective absolutely pure $\CO_X$-modules.
\begin{corollary}\label{t61}
Assume that $X$ has a dualizing complex $\DD$. Then, there is a pair
of equivalences

\begin{align}
\xymatrix@C+3pc@R+3pc{\DPFX
\ar@<.75ex>[r]^{\DD\otimes^\bullet_{\CO_X}-}&
 \DPABX \ar@<0.5ex>[l]^{\mathbf{Hom}_{\mathrm{qc}}^\bullet(\DD,-)} }
\end{align}
of triangulated categories.
\end{corollary}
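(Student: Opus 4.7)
The strategy is to concatenate three equivalences the paper has already established: $\DPFX\cong\KPIFX$ from \cite[Proposition 3.7]{Ho17}, $\KPIFX\cong\KIX$ from Theorem \ref{t6}, and $\KIX\cong\DPABX$ from Corollary \ref{t621055} (this last one uses that $\KPABX=0$, which in turn follows from Theorem \ref{t62921} and Lemma \ref{t629}). Chaining these gives an equivalence between $\DPFX$ and $\DPABX$; the substantive remaining task is to check that the composite is computed on \emph{every} object of $\DPFX$ by the naive formula $\DD\otimes^\bullet_{\CO_X}-$, and dually for its inverse.

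Accordingly, I would first verify that the two candidate functors are defined on the correct source and target categories. For any complex of flats $\F$, the stalkwise identity $(\DD\otimes^\bullet_{\CO_X}\F)_x=\DD_x\otimes^\bullet_{\CO_x}\F_x$ combined with Proposition \ref{mes1} and the noetherian-local fact that the tensor of an injective with a flat is injective shows that $\DD\otimes^\bullet_{\CO_X}-$ lands in $\KIX$; dually, Lemma \ref{Flat} ensures that $\mathbf{Hom}_{\mathrm{qc}}^\bullet(\DD,-)$ takes $\KIX$ into $\KPIFX\subseteq\KFX$. The delicate next step is to see that $\DD\otimes^\bullet_{\CO_X}-$ kills $\KPFX$: for a pure acyclic $\F$, Theorem \ref{bound19806} furnishes a pure quasi-isomorphism $\Phi_\F:\F\lrt\mathbf{Hom}_{\mathrm{qc}}^\bullet(\DD,\DD\otimes^\bullet_{\CO_X}\F)$, so the target is a pure acyclic complex of pure injective flats, hence contractible by \cite[Theorem 3.6]{Ho17}. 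Applying the equivalence of Theorem \ref{t6} and using its counit then forces $\DD\otimes^\bullet_{\CO_X}\F$ to be contractible in $\KIX$.

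With both functors descending to the pure derived categories, the inverse relations come for free: for $\F\in\KFX$, the map $\Phi_\F$ is a pure quasi-isomorphism (Theorem \ref{bound19806}) and hence invertible in $\DPFX$, giving $\mathbf{Hom}_{\mathrm{qc}}^\bullet(\DD,-)\circ(\DD\otimes^\bullet_{\CO_X}-)\cong\id$; for $\CI\in\KIX=\DPABX$, the counit of the adjunction from Theorem \ref{t6} is already an isomorphism in $\KIX$, giving $(\DD\otimes^\bullet_{\CO_X}-)\circ\mathbf{Hom}_{\mathrm{qc}}^\bullet(\DD,-)\cong\id$.

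I expect the principal obstacle to be the descent of $\DD\otimes^\bullet_{\CO_X}-$ to $\DPFX$, because no earlier result directly asserts that tensoring a pure acyclic complex of \emph{arbitrary} flats with $\DD$ produces a contractible complex of injectives. The content of Theorem \ref{bound19806} is precisely the bridge from pure injective flats (where Theorem \ref{t6} lives) to arbitrary flats, and once that bridge is crossed the remainder of the argument is formal.
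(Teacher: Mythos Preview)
Your proposal is correct and follows the same route the paper takes: the text preceding Corollary~\ref{t61} already assembles the chain $\DPFX\cong\KPIFX$ (\cite[Proposition~3.7]{Ho17}), $\KPIFX\cong\KIX$ (Theorem~\ref{t6}), and $\KIX\cong\DPABX$ (Corollary~\ref{t621055}), and the corollary is stated without further argument. Your write-up is in fact more careful than the paper's, since you explicitly check that the composite is computed by $\DD\otimes^\bullet_{\CO_X}-$ on all of $\KFX$ (via Theorem~\ref{bound19806} and the contractibility of pure acyclic complexes of pure injective flats), a point the paper leaves implicit.
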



\begin{thebibliography}{9999}
\bibitem[AF97]{AF97} {\sc L. Avramov, H-B. Foxby,} {\sl Ring homomorphisms and finite Gorenstein dimension,}
Proc. Lond. Math. Soc.  {\bf 75}, (2) (1997), 241-270.
%\bibitem[B90]{B} {\sc  A. I. Bondal,} {\sl Representations of associative algebras and coherent sheaves,}
%Math. USSR-Izv. {\bf 34  (1)}, (1990),  23-42.

\bibitem[Bir06]{Bir06} {\sc  T. Bridgeland,}  {\sl Derived categories of coherent sheaves,}
ICM Proceedings, Vol II, (2006), 563-582 (http://www.icm2006.org).
\bibitem[BO02]{BO02} {\sc  A. I. Bondal, D. Orlov,}  {\sl Derived categories of coherent sheaves,}
ICM Proceedings,  Vol. II, (2002), 47-56
(http://www.mathunion.org/ICM/).
%\bibitem[BEE01]{BEE01} {\sc L. Bican, R. EL Bashir, E. Enochs,} {\sl All modules have flat cover,}
%Bull. Lond. Math. Soc. {\bf 33 (4)}, (2001), 385-390.
\bibitem [Co00]{Co}  {\sc B. Conrad, } {\sl
Grothendieck duality and base change,}  Lecture Notes in
Mathematics, {\bf 1750}, Springer-Verlag, Berlin, (2000).




\bibitem [EE05] {EE05} {\sc  E. Enochs, S. Estrada,} {\sl Relative
homological algebra in the category of quasi-coherent sheaves,} Adv.
Math. {\bf 194}, (2005), 284-295.

%\bibitem[EG98]{EG} {\sc  E. Enochs, J. Garca Rozas,} {\sl  Flat covers of complexes,} J. Algebra, {\bf 210}, (1998), 86-102.
%\bibitem [EJ00]{EJ}  {\sc E. Enochs, O.M.G. Jenda,} {\sl Relative homological algebra,} de Gruyter Exp. Math. {\bf 30}, Walter de Gruyter and Co. (2000).
%\bibitem[EO02]{EO02} {\sc  E. Enochs, L. Oyonarte,} {\sl  Flat covers and cotorsion envelopes of sheaves,} Proc.
 %Amer. Math. Soc, {\bf 130 (3)}, (2002), 1285 - 1292.
%\bibitem[Fo76]{Fo76} {\sc  T. Fox,} {\sl  Purity in locally-presentable monoidal categories,} J. Pur.  App. Algebra, {\bf 8 (3)}, (1976), 261-265.



\bibitem[Har97]{Har97} {\sc R. Hartshorne,} {\sl Algebraic geometry }, GTM, {\bf
52}, Springer-Verlag, Berlin, (1997).
\bibitem[Har66]{Har66} {\sc R. Hartshorne,} {\sl Residues and duality,} Lecture Notes in Mathematics,
{\bf 20}, Springer-Verlag,  (1966).
\bibitem[HS13]{HS13} {\sc E. Hosseini, Sh. Salarian,} {\sl A cotorsion theory in the homotopy
category of flat quasi-coherent sheaves,} Proc. Amer. Math. Soc.
{\bf 141 (3)}, (2013), 753-762.
\bibitem[Ho17]{Ho17} {\sc E. Hosseini,} {\sl Bounded complexes of cotorsion sheaves,}
 Comm. Algebra, {\bf 45
(7)}, (2017),  3068-3074.
\bibitem[HZ18]{HZ18} {\sc E. Hosseini, A. Zaghian,} {\sl Purity and flatness  in symmetric monoidal closed  exact  categories,}
To appear in  Journal algebra and its application (2018),
https://arxiv.org/abs/1502.06196.
\bibitem[IK06]{IK} {\sc S. Iyengar, H. Krause,} {\sl Acyclicity versus total acyclicity
for complexes over noetherian rings,} Doc. Math. {\bf 11}, (2006),
207-240.
%\bibitem[Jr05]{Jr05} {\sc   P.  J{\o}rgensen,} {\sl The homotopy category of complexes of projective modules,}
%Adv. Math. {\bf 193 (1)}, (2005), 223-232.

%\bibitem[KM71]{KM71}  {\sc G. M. Kelly, S. Maclane,} {\sl Coherence in closed categories,} J. Pur. App. Alg.   {\bf 1 (1)}, (1971), 97-140.

%\bibitem[Kr12]{K12} {\sc H. Krause,} {\sl Approximations and adjoints in homotopy categories,}
%Math. Ann. {\bf 353},  (2012), 765-781.
%\bibitem[Kr10]{K10} {\sc H. Krause,} {\sl Localization theory for triangulated
%categories,} Triangulated categories,  LMS. Lecture Note Series:
%{\bf 375}, (2010), 161-236.
%\bibitem[Kr05]{Kr05} {\sc H. Krause,} {\sl The stable derived category of a noetherian scheme}, Compos. Math. {\bf 141}, (2005), 1128-1162.

%\bibitem[KS90]{KS90} {\sc M. Kashiwara, P. Schapira,} {\sl Sheaves on manifolds, } Grundlehren der
%mathematischen Wissenschaften, {\bf 292}, Springer-Verlag, (1990).
\bibitem[Lip09]{Lip} {\sc  J. Lipman,} {\sl Notes on derived functors and Grothendieck duality,}
Lecture Notes in Math. {\bf 1960}, Springer-Verlag, New York,
(2009), 1-259.
%\bibitem[Mac71]{Mac71} {\sc S. MacLane,} {\sl Categories for the Working Mathematicians,}
%GTM. {\bf 5}, Springer, Berlin-New York, (1971).
\bibitem[Me70]{Me70} {\sc   C.  Megibben,} {\sl Absolutely pure modules,} Proc. Amer. Math. Soc.
{\bf 26},  (1970),  561-566.
\bibitem[Mu07]{M07} {\sc  D. Murfet,} {\sl The mock homotopy category of projectives and Grothendieck duality,}
Ph.D thesis, Canberra, (2007).

\bibitem[Ne96]{Ne96} {\sc  A. Neeman,} {\sl The Grothendieck duality theorem
via Bousfield's techniques and Brown representability,} J. Amer.
Math. Soc.  {\bf (9) (1)}, (1996), 205-236.
\bibitem [Ne01]{Ne01} {\sc A. Neeman,} {\sl Triangulated categories}, Annals of Mathematics Studies, {\bf 148}, Princeton University Press, Princeton, (2001).
\bibitem[Ne08]{Ne08} {\sc  A. Neeman,} {\sl The homotopy category of flat modules and Grothendieck duality,}
Invent. Math. {\bf 174}, (2008), 255-308.
\bibitem[Ne10a]{Ne10} {\sc  A. Neeman,} {\sl Some adjoints in homotopy categories ,}  Annals Math. {\bf 171 (3)}, (2010), 2143-2155.
\bibitem[Ne10b]{Ne10a} {\sc A. Neeman,} {\sl Derived categories and Grothendieck duality,} Triangulated categories,  LMS. Lecture Note Series: {\bf 375},
(2010), 290-349.
\bibitem[Ne10c]{Ne10c} {\sc  A. Neeman,} {\sl  Dualizing
complexes - the modern way,}  Proceedings of the international
colloquium on Cycles, Motives and Shimura varieties,  Mumbai 2008,
Narosa Publishing House, (2010), 419-447.
\bibitem[Ne11]{Ne11} {\sc A. Neeman,} {\sl Rigid dualizing complexes,} Special Issue of the Bull. Iran. Math. Soc.
{\bf 37 (2)},  (2011), 273-290.
\bibitem[Ne11b]{Ne11b} {\sc A. Neeman,} {\sl Explicit cogenerators for the homotopy category of projective modules over a ring,} Annales scientifiques de l'École Normale Supérieure
Série 4,  V. {\bf 44(4)}, (2011), 607-629.
\bibitem[Orl10]{Orl10} {\sc  D. Orlov,} {\sl Derived categories of coherent Sheaves and triangulated categories of singularities,}  Progress in Mathematics, {\bf 270}, (2010), 503-531.
\bibitem[TT90]{TT} {\sc R. W. Thomason, T. Trobaugh,} {\sl Higher algebraic
{$K$}-theory of schemes and of derived categories}, The Grothendieck
Festschrift III, Progr. Math. {\bf 88}, Birkh\"auser Boston, Boston,
MA, (1990), 247-435.
\bibitem [We03]{We} {\sc Ch. Weibel,} {\sl An introduction to homological algebra}, Cambridge studies in advanced mathematics, {\bf 38}, Cambridge  University Press,  (2003).
\end{thebibliography}
\end{document}